\theoremstyle{plain}
\definecolor{hot}{RGB}{65,105,225}
\newtheorem{theorem}{Theorem}[section]
\theoremstyle{definition}
\newtheorem{defi}{Definition}[section]
\newtheorem{const}{Construction}[section]
\theoremstyle{remark}
\newtheorem{remark}[theorem]{Remark}
\numberwithin{equation}{section}
\theoremstyle{plain}
\newtheorem{thm}{Theorem}[section]
\newtheorem{prop}{Proposition}[section]
\newtheorem{coro}{Corollary}[section]
\newtheorem*{quest}{Question}
\theoremstyle{remark}
\newtheorem{rem}{Remark}[section]
\newtheorem*{ackn}{Acknowledgements}
\newtheorem*{thm*}{Theorem}
\newcommand{\Sp}{\operatorname{Spec}}
\newcommand{\dg}{\operatorname{\mathbf{dgArt}\!}{}}
\newcommand{\ens}{\operatorname{\mathbf{\mathscr{S}}\!}{}}
\newcommand{\Hom}{\operatorname{Hom\!}{}}
\newcommand{\Mo}{\operatorname{Mod\!}{}}
\newcommand{\Mod}{\operatorname{\mathbf{Mod}\!}{}}
\newcommand{\rep}{\operatorname{\mathbf{Rep}\!}{}}
\newcommand{\Rep}{\operatorname{Rep\!}{}}
\newcommand{\cdga}{\operatorname{cdga\!}{}}
\newcommand{\cdg}{\operatorname{\mathbf{cdga}\!}{}}
\newcommand{\Calg}{\operatorname{\textbf{CAlg}\!}{}}
\newcommand{\Fun}{\operatorname{\mathbf{Fun}\!}{}}
\newcommand{\Li}{\operatorname{Lie\!}{}}
\newcommand{\Lie}{\operatorname{\mathbf{Lie}\!}{}}
\newcommand{\Cn}{\operatorname{Cn\!}{}}
\newcommand{\Ho}{\operatorname{\mathbf{Ho}\!}{}}
\newcommand{\Map}{\operatorname{Map\!}{}}
\newcommand{\se}{\operatorname{\mathbf{Set}\!}{}}
\newcommand{\fin}{\operatorname{\mathscr{F}\mathrm{in}_*\!}{}}
\newcommand{\cat}{\operatorname{\mathscr{C}\mathrm{at}_{\infty}\!}{}}
\newcommand{\qc}{\operatorname{\mathbf{QCoh}\!}{}}
\newcommand{\qqc}{\operatorname{Coh\!}{}}
\newcommand{\Art}{\operatorname{\mathbf{Art}\!}{}}
\newcommand{\fun}{\operatorname{Fun\!}{}}
\begin{document}

\title{Formal moduli problems with cohomological constraints}

\author{An-Khuong Doan }

\address{An-Khuong DOAN, Department of Mathematics, KU Leuven, Celestijnenlaan 200B, 3001 Leuven, Belgium.}\email{an-khuong.doan@kuleuven.be}

\address{IMJ-PRG, UMR 7586, Sorbonne Université,  Case 247, 4 place Jussieu, 75252 Paris Cedex 05, France}
\thanks{ }

\address{CNRS-CMLS, UMR 7640, \'{E}cole polytechnique,  91128 Palaiseau Cedex, France}

\subjclass[2010]{14D15, 14B10, 13D10}

\date{August 26, 2024.}

\dedicatory{ }

\keywords{Deformation theory, Formal moduli problems, Cohomology jump loci}

\begin{abstract} We aim to generalize Lurie-Pridham's $\infty$-correspondence between formal moduli problems and  differential graded Lie algebras to the context where cohomological conditions are imposed. Specifically, a natural equivalence between formal moduli problems with cohomological constraints  and derived cohomology jump functors is provided, thereby answering affirmatively a question posed by N. Budur and B. Wang in \cite{1}.
\end{abstract}

\maketitle

\setcounter{tocdepth}{1}

\numberwithin{equation}{section}

\tableofcontents

\section{Introduction}  In classical deformation theory, there is a well-known principle which says that ``any reasonable deformation problem  is controlled by a differential graded Lie algebra and two quasi-isomorphic differential graded Lie algebras define the same deformation problem" whose first trace can be found in the letter written by P. Deligne to J. Milson in 1986 (cf. \cite{2}). This might be thought as an experimental observation from the condensation of almost three decades of works on concrete deformation problems among which we must mention the foundational one of Kodaira-Spencer on deformations of complex structures, in 1958 (cf. \cite{5}). In this case, deformations of a complex compact manifold $X_0$ are controlled by the Dolbeault complex with values in the holomorphic tangent bundle $\mathcal{T}_{X_0}$
\begin{equation} \label{e.1} \Gamma(X_0, \mathcal{A}^{0,0}(\mathcal{T}_{X_0}))\overset{\bar{\partial}}{\rightarrow}\Gamma(X_0, \mathcal{A}^{0,1}(\mathcal{T}_{X_0}))\overset{\bar{\partial}}{\rightarrow}\Gamma(X_0, \mathcal{A}^{0,2}(\mathcal{T}_{X_0}))\overset{\bar{\partial}}{\rightarrow}\cdots
\end{equation} with the Lie bracket defined by 
$$[\phi d\bar{z}_I, \psi  d\bar{z}_J]=[\phi,\psi]' d\bar{z}_I \wedge\bar{z}_J$$ where $\phi,\psi \in \mathcal{A}^{0,0}(\mathcal{T}_{X_0})$ are vector fields on $X_0$, $[-,-]'$ is the usual Lie bracket of vector fields, $I,J\subset \lbrace 1,\ldots, n \rbrace$ and $z_1,\ldots, z_n$ are local holomorphic coordinates (cf. \cite{5, Man04, Man22}). The same phenomenon happens to deformations of a holomorphic vector bundle $E$ defined over a complex compact manifold where the controlling differential graded Lie algebra is obtained by replacing the holomorphic tangent bundle $\mathcal{T}_{X_0}$ in \ref{e.1} by the endomorphism bundle $\mathrm{End}(E)$ (cf. \cite{Fuk03, Man22}). These two examples are living evidence for the aforementioned credo and then  naturally suggest a mathematically rigorous formulation of it. This had not been done for a long time until the appearance of Lurie's work (cf. \cite{6}) and independently that of Pridham (cf. \cite{11}), both from around 2010 in the framework of derived algebraic geometry which give an end to the question ``In which context is Deligne's principle a theorem?". Namely, with appropriate axiomatizations, they prove that there is an $\infty$-equivalence between the category of formal moduli problems (a natural derived version of classical deformation problems) and that of differential graded Lie algebras (see Theorem \ref{t2.1} below), which is  revolutionary in derived deformation theory. Moreover, it generalized Schlessinger's work (cf. \cite{9}) on functors of artinian rings to its extremity. For a detailed history about this revolution,  we recommend the beautiful seminar paper \cite{12} of B. To\"{e}n to the curious reader.

N. Budur and B. Wang defined in \cite{1} a new kind of deformation problems: the ones with cohomological constraints. Concretely speaking, if classical deformation problems are viewed as ``good" functors of artinian rings, then the ones with cohomological constraints are their ``good" sub-functors. To do this, given a classical deformation problem whose controlling differential graded Lie algebra is $\mathfrak{g}_*$, and given any $\mathfrak{g}_*$-module $M_*$, by means of a generalized version of Fitting ideals for chain complexes of finitely generated modules over noetherian rings, they defined canonically sub-functors of the Maurer-Cartan functor associated to $\mathfrak{g}_* $, to which they referred  as ``cohomology jump functors" attached to the pair $(\mathfrak{g}_*,M_*)$ (cf. Definition \ref{d4.3}). Moreover, they proposed a cohomological constraint version of Deligne's principle that in reality, any deformation problem with cohomological constraints over a field of characteristic zero should arise in this way and that two quasi-isomorphic pairs define the same deformation problem with cohomological constraints (cf. \cite[Theorem 3.16]{1}). The typical example is deformations of a holomorphic vector bundle $E$ with cohomological constraints (cf. \cite[Theorem 6.4]{1}). They are controlled by the pair $(\mathfrak{g}_*^E, M^E_*)$ where $\mathfrak{g}_*^E$ and $M^E_*$ are the Dolbeault complexes with values in $\mathrm{End}(E)$ and in $E$, respectively. The $\mathfrak{g}_*^E$-module structure of $M^E_*$ comes from the natural map $\mathrm{End}(E)\otimes E \rightarrow E$. Furthermore, deformations with cohomology constraints have been generalized to the $L_\infty$ setting - a much more flexible framework for practical purposes where differential graded pairs (dgLa, dg-module) are replaced by $L_\infty$ pairs ($L_\infty$ algebra, $L_\infty$ module) (cf. \cite{BR18}).
 Inspired by Lurie-Pridham's realization of Deligne's principle, Budur and Wang posed the following natural question.

\begin{quest} Does there exist a similar version of Lurie-Pridham's $\infty$-equivalence theorem between deformation problems with cohomological constraints and pairs of differential graded Lie algebras and their modules?
\end{quest}
Our objective in this paper is to give an affirmative answer to this question. More precisely, we study quasi-coherent sheaves on formal moduli problems and link them with representations of associated differential graded Lie algebras. These objects are well-developed by J. Lurie (cf. \cite{6}).  It turns out to be that imposing cohomological constraints on such sheaves gives rise to usual cohomology jump functors thereby obtaining an $\infty$-formulation of Budur-Wang's principle.

Let us now outline the organization of this article. The next three sections are dedicated to a quick review of representations of differential graded Lie algebras (or modules over differential graded Lie algebras, according to the terminology of N. Budur and B. Wang) and quasi-coherent sheaves on formal moduli problems. In $\S\ref{s4}$, we recall the construction of cohomology jump functors of which we propose a derived version. In $\S\ref{s5}$, we reply to the question of N. Budur and B. Wang by providing a version of Lurie-Pridham's theorem with cohomological constraints (cf. Theorem \ref{t5.1} and Corollary \ref{t5.1}). Finally, in $\S\ref{s6}$, we study some interesting classes of formal moduli problems with cohomological constraints. 
\newline
\newline
\textbf{Conventions and notations}: \begin{enumerate}
\item[$\bullet$] A field of characteristic $0$ will be always denoted by $k$.
\item[$\bullet$] dgLa is the abbreviation of differential graded Lie $k$-algebra while cdga means commutative differential graded augmented $k$-algebra.
\item[$\bullet$] $\Mo_k$ is the category of chain complexes of $k$-modules and $\Mod_k$ is the corresponding $\infty$-category.
\item[$\bullet$] $\Li_k$ is the category of differential graded Lie $k$-algebras and $\Lie_k$ is the corresponding $\infty$-category.
\item[$\bullet$] $\cdga_k$ is the category of commutative differential graded augmented $k$-algebras and $\cdg_k$ is the corresponding $\infty$-category.
\item[$\bullet$] For any $A_* \in \cdg_k$ (respectively, $\mathfrak{g}_* \in \Lie_k$), $\Mod_{A_*}$ (respectively, $\Mod_{\mathfrak{g}_*}$) denotes the corresponding $\infty$-category of $A_*$-modules (respectively, $\mathfrak{g}_*$-modules).
\item[$\bullet$] $\dg_k$ denotes the full sub-category of $\cdg_k$ consisting of commutative differential graded artinian algebras cohomologically concentrated in non-positive degrees.
\item[$\bullet$] $\Art_k$ is the category of artinian rings with residue field $k$.
\item[$\bullet$] $\ens$ is the category of simplicial sets.
\item[$\bullet$] $\mathcal{FMP}$  is the homotopy category of formal moduli problems. 
\item[$\bullet$] For a given formal moduli problem $X$, $\qc(X)$ denotes the $\infty$-category of quasi-coherent sheaves on $X$. 
\end{enumerate}

\begin{ackn}  The author would like to profoundly thank J. Lurie for explaining some notions in \cite{6} on which the paper relies heavily. A warm thank is sent to N. Budur for several precious comments and interesting questions which helped to improve not only the quality but also the readability of the paper.  Finally, the author is specially thankful to the referee whose dedicated work led to a great amelioration of this paper.

The work is supported by the Institut de Mathématiques de Jussieu-Paris Rive Gauche, the Centre de Math\'{e}matiques Laurent Schwartz and the Grant G0B3123N of N. Budur from FWO.
\end{ackn}
\section{Preliminaries on higher algebra} In this section, we recall some elementary notions in higher algebra, which shall be repeatedly used throughout the rest of the paper. For more details, the interested reader is referred to \cite[Chapter 2]{7}. We start with the notion of symmetric monoidal categories. 
\begin{defi} A \textit{symmetric monoidal category} is a monoidal category $(\mathscr{C},\otimes, I)$ such that, for every pair $A, B$ of objects in $\mathscr{C}$, there is an isomorphism  $s_{AB}:A\otimes B\to B\otimes A$ that is natural in both $A$ and $B$ and such that the following diagrams commute:

\begin{enumerate}
\item[$\bullet$] The \textit{unit coherence}
     
\begin{center}
\tikzset{every picture/.style={line width=0.75pt}} 

\begin{tikzpicture}[x=0.6pt,y=0.6pt,yscale=-1,xscale=1]

\draw    (139,120) -- (237.59,218.59) ;
\draw [shift={(239,220)}, rotate = 225] [color={rgb, 255:red, 0; green, 0; blue, 0 }  ][line width=0.75]    (10.93,-3.29) .. controls (6.95,-1.4) and (3.31,-0.3) .. (0,0) .. controls (3.31,0.3) and (6.95,1.4) .. (10.93,3.29)   ;
\draw    (160,110) -- (321,110) ;
\draw [shift={(323,110)}, rotate = 180] [color={rgb, 255:red, 0; green, 0; blue, 0 }  ][line width=0.75]    (10.93,-3.29) .. controls (6.95,-1.4) and (3.31,-0.3) .. (0,0) .. controls (3.31,0.3) and (6.95,1.4) .. (10.93,3.29)   ;
\draw    (341,121) -- (252.35,218.52) ;
\draw [shift={(251,220)}, rotate = 312.27] [color={rgb, 255:red, 0; green, 0; blue, 0 }  ][line width=0.75]    (10.93,-3.29) .. controls (6.95,-1.4) and (3.31,-0.3) .. (0,0) .. controls (3.31,0.3) and (6.95,1.4) .. (10.93,3.29)   ;

\draw (108,102.15) node [anchor=north west][inner sep=0.75pt]    {$A\otimes I$};
\draw (329.75,102.15) node [anchor=north west][inner sep=0.75pt]    {$I\otimes A$};
\draw (233.75,223.15) node [anchor=north west][inner sep=0.75pt]    {$A$};
\draw (233.75,92.15) node [anchor=north west][inner sep=0.75pt]    {$s_{AI}$};
\draw (160.75,166.15) node [anchor=north west][inner sep=0.75pt]    {$r_{A}$};
\draw (304.75,166.15) node [anchor=north west][inner sep=0.75pt]    {$l_{A}$};

\end{tikzpicture}

\end{center}

\item[$\bullet$] The \textit{associativity coherence}

\begin{center}

\tikzset{every picture/.style={line width=0.75pt}} 

\begin{tikzpicture}[x=0.6pt,y=0.6pt,yscale=-1,xscale=1]

\draw    (191,40) -- (393,40) ;
\draw [shift={(395,40)}, rotate = 180] [color={rgb, 255:red, 0; green, 0; blue, 0 }  ][line width=0.75]    (10.93,-3.29) .. controls (6.95,-1.4) and (3.31,-0.3) .. (0,0) .. controls (3.31,0.3) and (6.95,1.4) .. (10.93,3.29)   ;
\draw    (191,220) -- (393,220) ;
\draw [shift={(395,220)}, rotate = 180] [color={rgb, 255:red, 0; green, 0; blue, 0 }  ][line width=0.75]    (10.93,-3.29) .. controls (6.95,-1.4) and (3.31,-0.3) .. (0,0) .. controls (3.31,0.3) and (6.95,1.4) .. (10.93,3.29)   ;
\draw    (140,50) -- (140,114) ;
\draw [shift={(140,116)}, rotate = 270] [color={rgb, 255:red, 0; green, 0; blue, 0 }  ][line width=0.75]    (10.93,-3.29) .. controls (6.95,-1.4) and (3.31,-0.3) .. (0,0) .. controls (3.31,0.3) and (6.95,1.4) .. (10.93,3.29)   ;
\draw    (140,140) -- (140,204) ;
\draw [shift={(140,206)}, rotate = 270] [color={rgb, 255:red, 0; green, 0; blue, 0 }  ][line width=0.75]    (10.93,-3.29) .. controls (6.95,-1.4) and (3.31,-0.3) .. (0,0) .. controls (3.31,0.3) and (6.95,1.4) .. (10.93,3.29)   ;
\draw    (456,50) -- (456,114) ;
\draw [shift={(456,116)}, rotate = 270] [color={rgb, 255:red, 0; green, 0; blue, 0 }  ][line width=0.75]    (10.93,-3.29) .. controls (6.95,-1.4) and (3.31,-0.3) .. (0,0) .. controls (3.31,0.3) and (6.95,1.4) .. (10.93,3.29)   ;
\draw    (456,141) -- (456,205) ;
\draw [shift={(456,207)}, rotate = 270] [color={rgb, 255:red, 0; green, 0; blue, 0 }  ][line width=0.75]    (10.93,-3.29) .. controls (6.95,-1.4) and (3.31,-0.3) .. (0,0) .. controls (3.31,0.3) and (6.95,1.4) .. (10.93,3.29)   ;

\draw (84.75,31.15) node [anchor=north west][inner sep=0.75pt]    {$( A\otimes B) \otimes C$};
\draw (398.75,31.15) node [anchor=north west][inner sep=0.75pt]    {$( B\otimes A) \otimes C$};
\draw (84.75,122.15) node [anchor=north west][inner sep=0.75pt]    {$A\otimes ( B\otimes C)$};
\draw (398.75,122.15) node [anchor=north west][inner sep=0.75pt]    {$B\otimes ( A\otimes C)$};
\draw (84.75,212.15) node [anchor=north west][inner sep=0.75pt]    {$( B\otimes C) \otimes A$};
\draw (398.75,212.15) node [anchor=north west][inner sep=0.75pt]    {$B\otimes ( C\otimes A)$};
\draw (267.75,19.15) node [anchor=north west][inner sep=0.75pt]    {$s_{AB} \otimes \mathrm{1}_{C}$};
\draw (88.75,72.15) node [anchor=north west][inner sep=0.75pt]    {$a_{ABC}$};
\draw (463.75,72.15) node [anchor=north west][inner sep=0.75pt]    {$a_{BAC}$};
\draw (73.75,160.15) node [anchor=north west][inner sep=0.75pt]    {$a_{A,B\otimes C}$};
\draw (463.75,160.15) node [anchor=north west][inner sep=0.75pt]    {$\mathrm{1}_{B} \otimes s_{AC}$};
\draw (273.75,223.15) node [anchor=north west][inner sep=0.75pt]    {$a_{BCA}$};

\end{tikzpicture}

\end{center}
\item[$\bullet$] The \textit{inverse law}
\begin{center}

\tikzset{every picture/.style={line width=0.75pt}} 

\begin{tikzpicture}[x=0.6pt,y=0.6pt,yscale=-1,xscale=1]

\draw    (161,220) -- (260.71,102.52) ;
\draw [shift={(262,101)}, rotate = 130.32] [color={rgb, 255:red, 0; green, 0; blue, 0 }  ][line width=0.75]    (10.93,-3.29) .. controls (6.95,-1.4) and (3.31,-0.3) .. (0,0) .. controls (3.31,0.3) and (6.95,1.4) .. (10.93,3.29)   ;
\draw    (270,102) -- (368.71,219.47) ;
\draw [shift={(370,221)}, rotate = 229.96] [color={rgb, 255:red, 0; green, 0; blue, 0 }  ][line width=0.75]    (10.93,-3.29) .. controls (6.95,-1.4) and (3.31,-0.3) .. (0,0) .. controls (3.31,0.3) and (6.95,1.4) .. (10.93,3.29)   ;
\draw    (182,230) -- (351,230) ;
\draw    (182,236) -- (351,236) ;

\draw (125.75,225.15) node [anchor=north west][inner sep=0.75pt]    {$A\otimes B$};
\draw (353.75,225.15) node [anchor=north west][inner sep=0.75pt]    {$A\otimes B$};
\draw (239.75,80.15) node [anchor=north west][inner sep=0.75pt]    {$B\otimes A$};
\draw (183.75,137.15) node [anchor=north west][inner sep=0.75pt]    {$s_{AB}$};
\draw (315.75,137.15) node [anchor=north west][inner sep=0.75pt]    {$s_{BA}$};

\end{tikzpicture}

\end{center}
\end{enumerate}
\end{defi}

\begin{const}\cite[Construction 2.0.0.1]{7} \label{maincons} Let $(\mathscr{C}, \otimes)$ be a symmetric monoidal category. We can build up a new category $\mathscr{C}^{\otimes}$ from $\mathscr{C}$ in the following way.
\begin{enumerate}
\item[(i)] An object of $\mathcal{C}^{\otimes}$ is a finite (maybe empty) sequence of objects of $\mathcal{C}$, denoted by $[C_1,\ldots, C_n]$.
\item[(ii)] A morphism from $[C_1,\ldots, C_n]$ to $[C_1',\ldots, C_m']$ consists of a subset $S \subseteq \lbrace 1,\ldots, n\rbrace$, a map of finite sets $\alpha: S \rightarrow \lbrace 1,\ldots, m\rbrace$ and a collection of morphisms $\lbrace f_j: \bigotimes _{\alpha(i)=j} C_i \rightarrow C_j'\rbrace_{1\leq j\leq m}$ in the category $\mathscr{C}$.
\item[(iii)] Given two morphisms $f: [C_1,\ldots, C_n] \rightarrow [C_1',\ldots, C_m']$ and $g: [C_1',\ldots, C_m'] \rightarrow [C_1'',\ldots, C_l''] $ in $\mathcal{C}^{\otimes}$, determining subsets $S \in \lbrace 1,\ldots, n\rbrace$ and $T \in \lbrace 1,\ldots, m\rbrace$ together with maps $\alpha: S \rightarrow \lbrace 1,\ldots, n\rbrace$ and $\beta: T \rightarrow \lbrace 1,\ldots, l\rbrace$. The composition $g\circ f $ is given by the subset $\beta^{-1}(T)\subseteq \lbrace 1,\ldots, n\rbrace$, the composite map $\beta \circ \alpha: U \rightarrow \lbrace 1,\ldots, l\rbrace$ and the collection of maps
$$\bigotimes_{\beta\circ \alpha (i)=k}C_i \cong \bigotimes_{\beta(j)=k}\bigotimes_{\alpha(i)=j}C_i \rightarrow \bigotimes _{\beta(j)=k}C_j' \rightarrow C_k''$$ for $1 \leq k \leq l$.
\end{enumerate}

\end{const}
In order to understand more profoundly the category $\mathscr{C}^{\otimes}$, we recall also the definition of Segal's category $\mathscr{F}\mathrm{in}_*$ of pointed finite sets. 
\begin{defi} \cite[Notation 2.0.0.2]{7} $\mathscr{F}\mathrm{in}_*$ is the category whose 
\begin{enumerate}
\item[(i)] objects are sets $\left< n\right>:= \lbrace * ,1,\ldots,n \rbrace$ for $n\in \mathbb{N}$.
\item[(ii)] morphisms are maps of finite sets $\alpha: \left<m \right> \rightarrow \left< n\right>$ such that $\alpha (*)=*$, where $\left< m \right>$ and $\left<n \right>$ are objects of $\mathscr{F}\mathrm{in}_*$.
\end{enumerate}
\end{defi} 

The following class of morphisms in $\mathscr{F}\mathrm{in}_*$ shall play a crucial role in describing axiomatically the symmetric monoidal structure on a given category. For each pair of nonzero natural numbers $(i,n)$ where $i\leq n$, we denote by $\rho^i :\left< n \right>  \rightarrow \left< 1 \right> $ the morphism defined by the rule
$$\rho^i(j)=\begin{cases}
1 & \text{ if } i=j \\
* & \text{ otherwise.} 
\end{cases}$$

For any symmetric monoidal category $\mathscr{C}$, we can consider the obvious forgetful functor from the category $\mathscr{C}^{\otimes}$ in Construction \ref{maincons} to the category $\mathscr{F}\mathrm{in}_*$ of pointed finite sets, i.e. the following functor
  \begin{align*}
p:  \mathscr{C}^{\otimes} & \rightarrow \mathscr{F}\mathrm{in}_*\\
[C_1,\ldots, C_n] &\mapsto \left< n \right>.\end{align*} 

This functor has two essential properties from which  the symmetric monoidal structure on $\mathscr{C}$ can be eventually recovered up to symmetric monoidal equivalence.
\begin{prop}\cite[Axioms (M1) and (M2), Page 167-168]{7} The following properties are fundamental:
\begin{enumerate} 
\item[(A1)] The functor $p$ is a cofibration of categories: for any object $C=[C_1,\ldots, C_n] \in \mathscr{C}^{\otimes}$ and every morphism $f:\left< n \right> \rightarrow \left< m \right> $ in  $\mathscr{F}\mathrm{in}_*$, there exists a morphism $\bar{f}: [C_1,\ldots, C_n] \rightarrow C'=[C_1',\ldots, C_m']$ in $\mathscr{C}^{\otimes}$, whose image under $p$ is exactly $f$. Furthermore, $\bar{f}$ is universal in the sense that the composition with $\bar{f}$ induces a bijective correspondence 
$$\;\;\;\;\;\;\;\;\;\;\;\; \Hom_{\mathscr{C}^{\otimes}}(C',[C_1'',\ldots, C_l'']) \rightarrow \Hom_{\mathscr{C}^{\otimes}}(C,[C_1'',\ldots, C_l''])\times _{\Hom_{\fin}(\left< n \right>,\left< l \right>)} \Hom_{\fin}(\left< m \right>,\left< l \right>)$$ for any object $C''=[C_1'',\ldots, C_l''] $ in $\mathscr{C}^{\otimes}$. Here $\bar f$ is chosen so that we have an isomorphism between $C'_j\simeq \otimes_{f(i)=j}C_i$ for $1\leq j \leq m$. Consequently, any morphism $f:\left< n \right> \rightarrow \left< m \right> $ in  $\mathscr{F}\mathrm{in}_*$ induces a functor between the fibers $\mathscr{C}^{\otimes}_{\left< n \right>} \rightarrow \mathscr{C}^{\otimes}_{\left< m \right>}$ unique up to canonical isomorphism.
\item[(A2)] for each $n \geq 1$, the class of morphisms $\lbrace \rho^i :\left< n \right>  \rightarrow \left< 1 \right> \rbrace_{1\leq i \leq n} $ induces an equivalence between the fiber $\mathscr{C}^{\otimes}_{\left< n \right>}$ of $p$ over  $\left< n \right>$ and  the $n$-fold product of copies of $\mathscr{C}$. 
\end{enumerate}
\end{prop}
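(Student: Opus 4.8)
The plan is to produce the cocartesian lifts by hand and then verify both assertions by unwinding the definition of $\mathscr{C}^{\otimes}$; the only genuine content lies in the universal property of (A1), where the coherence data of the symmetric monoidal structure intervenes. To lift a morphism $f\colon\left< n \right>\to\left< m \right>$ at an object $C=[C_1,\ldots,C_n]$, I would first read off from $f$ the subset $S=f^{-1}(\{1,\ldots,m\})$ and the map $\alpha=f|_{S}$, and then \emph{define} the target by $C_j':=\bigotimes_{f(i)=j}C_i$, with the empty tensor product set equal to the unit $I$. The lift $\bar f$ is then declared to be the morphism with data $(S,\alpha,\{\mathrm{1}_{C_j'}\}_{j})$, so that $p(\bar f)=f$ by construction.

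To check that $\bar f$ is universal, I would compute $g\circ\bar f$ for an arbitrary $g=(T,\beta,\{g_k\})\colon[C_1',\ldots,C_m']\to C''$ using the composition rule (iii). Because every component of $\bar f$ is an identity, the composite has underlying pointed map $\beta\circ f$ and $k$-th component equal to
$$\bigotimes_{\beta f(i)=k}C_i \;\overset{\theta_k}{\cong}\; \bigotimes_{\beta(j)=k}\Big(\bigotimes_{f(i)=j}C_i\Big)=\bigotimes_{\beta(j)=k}C_j' \;\xrightarrow{\,g_k\,}\; C_k'',$$
where $\theta_k$ is the canonical reassociation-and-permutation isomorphism attached to the partition $\{i:\beta f(i)=k\}=\bigsqcup_{\beta(j)=k}\{i:f(i)=j\}$. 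Conversely, a pair consisting of a morphism $h=(U,\gamma,\{h_k\})\colon C\to C''$ and a pointed map $\beta\colon\left< m \right>\to\left< l \right>$ with $\gamma=\beta\circ f$ is exactly an element of the fibre product on the right-hand side of the displayed bijection, and I claim it determines $g$ uniquely: one is forced to set $p(g)=\beta$ and $g_k:=h_k\circ\theta_k^{-1}$. The coherence diagrams of Definition 1.1 guarantee that each $\theta_k$ is a well-defined isomorphism, whence $g_k$ exists and is unique, giving the asserted bijection. The last clause of (A1) is then formal: assigning to each $f$ the functor $f_{!}$ that sends $C$ to the target of $\bar f$ produces the functor between fibres, canonically determined up to isomorphism by the universal property.

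For (A2), I would first identify the fibre $\mathscr{C}^{\otimes}_{\left< n \right>}$ explicitly. Its objects are the length-$n$ sequences $[C_1,\ldots,C_n]$, and a morphism lying over $\mathrm{1}_{\left< n \right>}$ must have $S=\{1,\ldots,n\}$ and $\alpha=\mathrm{1}$, so it reduces to a tuple $(f_1,\ldots,f_n)$ of morphisms $f_j\colon C_j\to C_j'$ in $\mathscr{C}$; hence $\mathscr{C}^{\otimes}_{\left< n \right>}\cong\mathscr{C}^{\times n}$ canonically. Applying the construction of the lift to $\rho^i$, and using that $\{j:\rho^i(j)=1\}=\{i\}$, shows that $\rho^i_{!}$ sends $[C_1,\ldots,C_n]$ to $[C_i]$, i.e.\ $\rho^i_{!}\colon\mathscr{C}^{\otimes}_{\left< n \right>}\to\mathscr{C}^{\otimes}_{\left< 1 \right>}\cong\mathscr{C}$ is the $i$-th projection. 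Consequently the product functor $(\rho^1_{!},\ldots,\rho^n_{!})$ coincides with the canonical isomorphism $\mathscr{C}^{\otimes}_{\left< n \right>}\xrightarrow{\sim}\mathscr{C}^{\times n}$, which is in particular the desired equivalence.

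The step I expect to be the main obstacle is the coherence bookkeeping for the isomorphisms $\theta_k$ in (A1): one must argue that the iterated tensor products occurring in the composition rule are identified by a \emph{canonical} isomorphism, independent of bracketing and of the chosen ordering of the indices $i$ with $\beta f(i)=k$. This is precisely what Mac Lane's coherence theorem—encoded in the unit, associativity, and symmetry diagrams of Definition 1.1—supplies, and once it is invoked the remaining verifications are a routine unwinding of the definitions.
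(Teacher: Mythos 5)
Your proof is correct. Note first that there is nothing in the paper to compare it against: the proposition is stated as a recollection of the basic properties of the construction $\mathscr{C}^{\otimes}$, with the reader referred to Lurie's Higher Algebra for details, so the paper contains no proof of its own. Your verification is the standard one and it is sound: you build the candidate cocartesian lift $\bar f$ explicitly, with target $C_j'=\bigotimes_{f(i)=j}C_i$ (empty product equal to the unit) and identity components, check $p(\bar f)=f$, and then prove the universal property by observing that for a pair $(h,\beta)$ in the fiber product the data of $g$ are forced: $T=\beta^{-1}(\{1,\ldots,l\})$, the map $\beta|_T$, and $g_k=h_k\circ\theta_k^{-1}$; the identification $\mathscr{C}^{\otimes}_{\langle n\rangle}\cong\mathscr{C}^{\times n}$ together with the computation that $\rho^i_!$ is the $i$-th projection then yields (A2). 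You are also right to single out coherence as the one point of real substance: the expressions $\bigotimes_{\alpha(i)=j}C_i$ only make sense after fixing an ordering and bracketing convention, and both the well-definedness of the composition rule (iii) and the canonicity of the isomorphisms $\theta_k$ rest on Mac Lane's coherence theorem, exactly as you say. Two small glosses you could expand for completeness, though neither affects correctness: (a) functoriality of $p$ itself, i.e. $p(g\circ f)=p(g)\circ p(f)$, which you use implicitly when checking that $(g\circ\bar f,\,p(g))$ lands in the fiber product; and (b) the definition of $f_!$ on morphisms of the fiber --- given $\varphi\colon C\to D$ over $\mathrm{id}_{\langle n\rangle}$, apply the universal property of $\bar f_C$ to the pair $(\bar f_D\circ\varphi,\,\mathrm{id}_{\langle m\rangle})$ to produce $f_!(\varphi)$, with functoriality following from uniqueness.
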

We are going to see that the advantage of these two axioms is clearly revealed in Definition \ref{symoincat} when one has to deal with $\infty$-categories, after the following two remarks. 
\begin{rem} Recall that a \textit{simplicial set} is just a functor $X: \Delta^{op}\rightarrow \mathrm{Set}$ where $\Delta$ is the simplicial category whose opposite category is $\Delta^{op}$ . The \textit{opposite simplicial set} of $X$, denoted by $X^{op}$, is defined to be the functor $X\circ \mathrm{op}_{\Delta}$ where $\mathrm{op}_{\Delta}: \Delta \rightarrow \Delta^{op}$ is the functor that reverses the arrows in $\Delta$ and acts as the identity on objects.
\end{rem}
\begin{rem}\cite[Definition 2.4.2.1]{8} We say that a map $p: X \rightarrow S$ of simplicial sets is a \textit{Cartesian fibration} if the following conditions are satisfied:
\begin{enumerate}
\item[(1)] The map $p$ is an inner fibration: that is, it has the right lifting property with respect to all horn inclusions $\Lambda_i^n \subset \Delta^n$ for $0< i< n$.

\item[(2)] For every edge $f:x \rightarrow y$ of $S$ and every vertex $\tilde{y}$ of $X$ with $p(\tilde{y})=y$, there exists a $p$-Cartesian edge $\tilde{f}: \tilde{x}\rightarrow \tilde{y}$ with $p(\tilde{f})=f$. 
\end{enumerate}
We say that $p$ is a \textit{coCartesian} if the opposite map $p^{op}: X^{op}\rightarrow S^{op}$ is a Cartesian fibration.
\end{rem}
\begin{defi}\cite[Definition 2.0.0.7, Example 2.1.2.18]{7} \label{symoincat} Let $\mathscr{C}$ be a simplicial category.  A structure of \textit{symmetric monoidal} $\infty$-\textit{category} on $\mathscr{C}$ is a coCartesian fibration of simplical sets $$ p: \mathscr{C}^{\otimes}  \rightarrow \mathrm{N}(\mathscr{F}\mathrm{in}_*)$$ satisfying the property that for any $n\geq 1$, the class of maps $\lbrace \rho^i :\left< n \right>  \rightarrow \left< 1 \right> \rbrace_{1\leq i \leq n} $ induces functors 
$\rho^i :\mathscr{C}^{\otimes}_{\left< n \right> } \rightarrow \mathscr{C}^{\otimes}_{\left< 1 \right>} $ which determines equivalences $\mathscr{C}^{\otimes}_{\left< n \right> } \cong (\mathscr{C}^{\otimes}_{\left< 1 \right>})^n$ of $\infty$-categories. Here,   $\mathrm{N}(\mathscr{F}\mathrm{in}_*)$ is the nerve of $\mathscr{F}\mathrm{in}_*$, which is a simplicial set.
\end{defi}
We refer the interested reader once again to \cite[Chapter 2]{7} to see why this definition fits well into the $\infty$-categorial setting. 

Let $\mathscr{C}$ be a symmetric monoidal $\infty$-category (cf. \cite[Section 2.1.3.1]{7}). We refer to $\Calg(\mathscr{C})$ the $\infty$-category of commutative associative algebra objects of $\mathscr{C}$. We let $\Mod(\mathscr{C})^{\otimes}$ be as in \cite[Definition 3.3.3.8]{7}. More precisely, the objects of this category are given by tuples $(A,M_1, \ldots, M_n)$ where each $M_i$ is a module over $A \in \Calg(\mathscr{C})$. Moreover, $\Mod(\mathscr{C})^{\otimes}$ comes naturally equipped with the obvious coCartesian fibration \begin{align*}
p_{\Mo}:  \Mod(\mathscr{C})^{\otimes} & \rightarrow \Calg(\mathscr{C}) \times \mathrm{N}(\mathscr{F}\mathrm{in}_*)\\
[A,M_1,\ldots, M_n] &\mapsto (A,\left< n \right>).\end{align*}

\section{Representations of differential graded Lie algebras} The materials here are essentially taken from \cite{6}.

\begin{defi}\label{d2.1} Let $\mathfrak{g}_*$ be a dgLa over a field $k$. A \emph{representation} of $\mathfrak{g}_*$ is a differential graded vector space $V_*$, equipped with a map of chain complexes of vector spaces
$$\mathfrak{g}_*\otimes_k V_* \rightarrow V_*$$ such that 
$$[x,y]v=x(yv)+(-1)^{pq} y(xv)$$ for $x\in \mathfrak{g}_p$ and $y\in\mathfrak{g}_q$. 

A morphism between two representations $V_*$ and $W_*$ of $\mathfrak{g}_*$ is a morphism of differential graded vector spaces $f: V_* \rightarrow W_*$ such that the following diagram is commutative \begin{center}
\begin{tikzpicture}[every node/.style={midway}]
  \matrix[column sep={10em,between origins}, row sep={3em}] at (0,0) {
    \node(Y){$\mathfrak{g}_*\otimes_k V_*$} ; & \node(X) {$V_*$}; \\
    \node(M) {$\mathfrak{g}_*\otimes_k W_*$}; & \node (N) {$W_*$};\\
  };
  
  \draw[->] (Y) -- (M) node[anchor=east]  {$\mathrm{Id}_{\mathfrak{g}_*}\otimes f$}  ;
  \draw[->] (Y) -- (X) node[anchor=south]  {};
  \draw[->] (X) -- (N) node[anchor=west] {$f$};
  \draw[->] (M) -- (N) node[anchor=north] {};.
\end{tikzpicture}
\end{center}

The representations of $\mathfrak{g}_*$ comprise a category which we will denote by $\Rep_{\mathfrak{g}_*}^{dg}$.
\end{defi}
\begin{rem} \label{l2.1} Sometimes, we also use the terminology ``modules over $\mathfrak{g}_*$'' or simply ``$\mathfrak{g}_*$-modules'' to refer to representations of $\mathfrak{g}_*$ if no potential confusion arises.
\end{rem}
\begin{prop} \cite[Chapter 2, Proposition 2.4.5.]{6}
The category $\Rep_{\mathfrak{g}_*}^{dg}$ of representations of a dgLa $\mathfrak{g}_*$ admits a combinatorial model structure, where:
\begin{itemize}
\item[(1)] A map $f: V_*\rightarrow W_*$ of representations of $\mathfrak{g}_*$ is a weak equivalence if and only if it is an isomorphism on cohomology.
\item[(2) ]A map $f: V_*\rightarrow W_*$ of representations of $\mathfrak{g}_*$ is a fibration if and only if it is degreewise surjective.
\end{itemize}
We denote $\rep_{\mathfrak{g}_*}$ to be the corresponding $\infty$-category of $\Rep_{\mathfrak{g}_*}^{dg}$ with respect to this model structure.
\end{prop}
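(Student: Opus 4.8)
The plan is to realise $\Rep_{\mathfrak{g}_*}^{dg}$ as a category of modules over a differential graded associative algebra and then transport the standard projective model structure on chain complexes along a free--forgetful adjunction. First I would recall that a representation of $\mathfrak{g}_*$ in the sense of Definition \ref{d2.1} is precisely the same datum as a differential graded left module over the universal enveloping algebra $U(\mathfrak{g}_*)$: the compatibility identity imposed in Definition \ref{d2.1} is exactly the relation cutting out $U(\mathfrak{g}_*)$ as a quotient of the tensor algebra $T(\mathfrak{g}_*)$, and a morphism of representations is the same thing as a morphism of $U(\mathfrak{g}_*)$-modules. This yields an isomorphism of categories $\Rep_{\mathfrak{g}_*}^{dg}\cong \mathrm{Mod}^{dg}_{U(\mathfrak{g}_*)}$, so it suffices to equip the latter with the desired model structure.

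Next I would invoke the well-known fact that $\Mo_k$ carries a combinatorial symmetric monoidal model structure (the projective one), in which weak equivalences are quasi-isomorphisms and fibrations are the degreewise surjections, with generating cofibrations and generating acyclic cofibrations built from the sphere and disk complexes. Since $U(\mathfrak{g}_*)$ is a monoid (an algebra object) in $\Mo_k$, the category $\mathrm{Mod}^{dg}_{U(\mathfrak{g}_*)}$ should inherit a model structure by right transfer along the adjunction $\free : \Mo_k \rightleftarrows \mathrm{Mod}^{dg}_{U(\mathfrak{g}_*)} : \forget$, where $\free = U(\mathfrak{g}_*)\otimes_k(-)$. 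Concretely, one declares a map to be a weak equivalence (respectively a fibration) exactly when its image under $\forget$ is one, so that by construction the weak equivalences are the quasi-isomorphisms and the fibrations the degreewise surjections; the generating (acyclic) cofibrations of $\mathrm{Mod}^{dg}_{U(\mathfrak{g}_*)}$ are obtained by applying $\free$ to those of $\Mo_k$. The existence of this transferred structure is guaranteed by the monoid-and-module transfer theorem of Schwede--Shipley, whose hypotheses reduce to the monoid axiom for $\Mo_k$.

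The only point requiring genuine verification is this monoid axiom, together with the acyclicity of relative cell complexes built from $\free(J)$, where $J$ is the set of generating acyclic cofibrations of $\Mo_k$; this is where I expect whatever small amount of real work there is to lie. Over a field it is harmless, however: every object of $\Mo_k$ is cofibrant and flat, so tensoring with $U(\mathfrak{g}_*)$ preserves quasi-isomorphisms, and in particular carries acyclic cofibrations to quasi-isomorphisms. The small-object argument then applies because $\Mo_k$ is locally presentable, and one obtains a cofibrantly generated model structure on $\mathrm{Mod}^{dg}_{U(\mathfrak{g}_*)}$ with exactly the stated weak equivalences and fibrations.

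Finally I would record combinatoriality. The category $\mathrm{Mod}^{dg}_{U(\mathfrak{g}_*)}$ is locally presentable, being the category of modules over a monoid in the locally presentable category $\Mo_k$ (equivalently, the category of algebras for an accessible monad on $\Mo_k$); combined with the cofibrant generation produced above, this is precisely the statement that the model structure is combinatorial. Transporting everything back along the isomorphism $\Rep_{\mathfrak{g}_*}^{dg}\cong \mathrm{Mod}^{dg}_{U(\mathfrak{g}_*)}$ gives the asserted combinatorial model structure, and $\rep_{\mathfrak{g}_*}$ is then its associated $\infty$-category.
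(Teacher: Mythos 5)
Your proposal is correct and is essentially the argument behind the paper's proof, which consists of a single citation to Lurie (DAG X, Proposition 2.4.5): there too, representations of $\mathfrak{g}_*$ are identified with differential graded left modules over the universal enveloping algebra $U(\mathfrak{g}_*)$, and the combinatorial model structure is the projective one transferred along the free--forgetful adjunction from chain complexes over $k$, with the acyclicity/monoid-axiom verification trivialized by flatness over a field. So you have reconstructed the cited proof rather than found a new one; nothing to flag.
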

\begin{defi}\cite[Construction 2.2.13]{6} The \emph{cohomological Chevalley-Eilenberg complex} of $\mathfrak{g}_*$ is defined to be the linear dual of the tensor product
$$U(\Cn(\mathfrak{g})_*)\otimes_{U(\mathfrak{g}_*)}^{\mathbb{L}}k,$$
which we shall denote by $C^*(\mathfrak{g}_*)$. Here $\mathrm{Cn}(-)$ is the mapping cone associated to the identity map and $U(-)$ is the universal enveloping algebra.
\end{defi}
There is a natural multiplication on $C^*(\mathfrak{g}_*)$. More precisely, for $\lambda \in C^p(\mathfrak{g}_*)$ and $\mu\in C^q(\mathfrak{g}_*)$, we define $\lambda\mu \in C^{p+q}(\mathfrak{g}_*)$ by the formula
$$(\lambda\mu)(x_1\cdots x_n)=\sum_{S,S'}\epsilon(S,S')\lambda(x_{i_1}\cdots x_{i_m})\mu(x_{j_1}\cdots x_{i_{n-m}}),$$ where $x_i\in \mathfrak{g}_{r_i}$, the sum is taken over all disjoint sets $S=\{ i_1< \cdots < i_m\}$ and $S'=\{j_1<\cdots <j_{n-m} \}$ and $r_{i_1}+\cdots +r_{i_m}=p$, and $\epsilon(S,S')=\prod_{i\in S',j\in S,i<j}(-1)^{r_ir_j}$. This multiplication imposes a structure of cdga on $C^*({\mathfrak{g}_*})$.

\begin{prop} \label{p2.2} \cite{Qui69}, \cite[Prop. 2.2.6, Prop. 2.2.7, Prop. 2.2.17]{6}, \cite[Proposition 4.3.5]{Por13}
With above notations, we have the following:
\begin{itemize}
\item[(1)] The construction $\mathfrak{g}_* \mapsto C^*(\mathfrak{g}_*)$ sends quasi-isomorphisms of dgLas to quasi-isomorphisms of cdgas. In particular, we obtain a functor between $\infty$-categories $\Lie_k \rightarrow \cdg_k^{op}$, which by abuse of notations, we still denote by $C^*$.

\item[(2)] The $\infty$-functor $C^*$ preserves small co-limits. Thus, $C^*$ admits a right adjoint $$D:\cdg_k^{op} \rightarrow \Lie_k$$ to which we refer as \textit{Koszul duality}. 
\item[(3)] The unit map
 $$A_*\overset{\simeq}{\rightarrow} C^*D(A_*)$$ is an equivalence in $\dg_k$.
 \item[(4)]  The following pair \[
\begin{tikzcd}[row sep=large, column sep=large]
\tikzcdset{row sep/normal=8em}
D:\dg_k \arrow[shift right, swap]{r}{}&D(\dg_k):C^*\arrow[shift right, swap]{l}{}
\end{tikzcd}
\] is an equivalence where $D(\dg_k)$ is the essential image of $\dg_k$ under the functor $D$.
\end{itemize}
\end{prop}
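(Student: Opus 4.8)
The four assertions together are the dg Lie algebra incarnation of Koszul duality, and the plan is to verify the homotopy-theoretic input (1) directly and then assemble (2)--(4) from the behaviour of $C^*$ on free objects and on square-zero extensions, following Lurie. For (1) the strategy is to observe that each constituent of the construction $\mathfrak{g}_*\mapsto C^*(\mathfrak{g}_*)$ is homotopy invariant. Over a field of characteristic zero the PBW theorem gives $\mathrm{gr}\,U(\mathfrak{g}_*)\cong \mathrm{Sym}(\mathfrak{g}_*)$, so a quasi-isomorphism $\mathfrak{g}_*\to\mathfrak{h}_*$ induces a quasi-isomorphism on symmetric algebras and hence, through the PBW filtration and the associated spectral sequence, a quasi-isomorphism $U(\mathfrak{g}_*)\to U(\mathfrak{h}_*)$; the same applies after passing to the functorial and homotopy-invariant cone $\Cn(\mathfrak{g})_*$. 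The derived tensor product $U(\Cn(\mathfrak{g})_*)\otimes^{\mathbb{L}}_{U(\mathfrak{g}_*)}k$ is homotopy invariant by construction, and finally $k$-linear duality is exact, so it carries quasi-isomorphisms to quasi-isomorphisms while reversing arrows. This contravariance is exactly what produces the target $\cdg_k^{op}$, and localizing at quasi-isomorphisms yields the $\infty$-functor $C^*\colon\Lie_k\to\cdg_k^{op}$, whose cdga structure is the one described just before the statement.

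The heart of the matter is (2), namely that $C^*$ preserves small colimits. Since $\Lie_k$ is presentable and generated under colimits by the free Lie algebras $\free(k[n])$, it suffices to understand $C^*$ on these generators and to check compatibility with the colimits that build a general object: coproducts, pushouts and filtered colimits. On a free Lie algebra one computes $C^*(\free(V))\simeq k\oplus V^{\vee}[-1]$, the trivial square-zero extension, and these square-zero extensions are precisely the objects detecting limits in $\cdg_k$; one then checks that coproducts, pushouts and filtered colimits of Lie algebras are sent to the corresponding products, pullbacks and filtered limits of augmented cdgas, i.e. to colimits in $\cdg_k^{op}$. Granting colimit preservation, the right adjoint $D$ is produced by representability rather than by a naive two-sided adjoint functor theorem: for each $A_*$ the functor $\mathfrak{g}\mapsto\Map_{\cdg_k}(A_*,C^*(\mathfrak{g}))\colon\Lie_k^{op}\to\ens$ is limit-preserving (because $C^*$ preserves colimits and mapping spaces send colimits in the source variable to limits) and accessible, hence representable by an object $D(A_*)\in\Lie_k$. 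The point to be careful about is that $\cdg_k^{op}$ is itself \emph{not} presentable, so the existence of $D$ must be extracted from representability over the presentable category $\Lie_k$.

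For (3) the claim is that the unit $A_*\to C^*D(A_*)$ is an equivalence when $A_*$ is artinian and coconnective, and the plan is an induction on the square-zero filtration. Both $\mathrm{id}$ and $C^*D$ are compatible with the finite limits out of which such an $A_*$ is assembled, since every object of $\dg_k$ is obtained from $k$ by finitely many elementary square-zero extensions, i.e. pullbacks along maps $k\to k\oplus k[n+1]$. The unit is an equivalence on the base case $k$ and on each generator $k\oplus k[n]$ by the explicit computation of $D$ dual to the one used in (2); since $D$ is a right adjoint it preserves these pullbacks and $C^*$ carries the relevant colimits to limits, so the equivalence propagates to all of $\dg_k$ by induction on the length. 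Part (4) is then formal: the unit being an equivalence on $\dg_k$ forces $D|_{\dg_k}$ to be fully faithful, so $D\colon\dg_k\to D(\dg_k)$ is an equivalence onto its essential image with inverse $C^*$, and each $D(A_*)$ has perfect underlying complex because $A_*$ is finite-dimensional and duality preserves perfectness, whence $D(A_*)$ is a compact object of $\Lie_k$.

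I expect the colimit preservation in (2) to be the principal obstacle: the computation of $C^*$ on free Lie algebras together with the verification that coproducts and pushouts of dglas go to products and pullbacks of cdgas is the genuinely technical step, and it is compounded by the fact that, because $\cdg_k^{op}$ is not presentable, the adjoint $D$ cannot be obtained from a bare application of the adjoint functor theorem but only through the representability criterion over $\Lie_k$. The remaining ingredients -- the homotopy invariance of (1) and the square-zero induction of (3) -- are routine once the functoriality and colimit behaviour of $C^*$ are in place.
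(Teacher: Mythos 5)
The paper never actually proves this proposition: the section is a recollection of Lurie's results, and the reader is sent to \cite{6} for the arguments, so your proposal has to be measured against Lurie's proof rather than anything in the text. Your parts (1) and (2) do follow that proof's architecture and are sound as a sketch: PBW plus exactness of $k$-linear duality for homotopy invariance, the computation $C^*(\free(V))\simeq k\oplus V^{\vee}[-1]$, the reduction of colimit preservation to coproducts, pushouts and filtered colimits, and the construction of $D$ by representability over the presentable $\infty$-category $\Lie_k$ (your caveat that $\cdg_k^{op}$ is not presentable, so the bare adjoint functor theorem does not apply, is exactly the right one). The deduction of fully faithfulness in (4) from (3) is likewise correct and purely formal.

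The genuine gap is in (3), which you describe as ``routine'' but which is in fact the heart of the entire theorem. Your inductive step rests on the claim that ``since $D$ is a right adjoint it preserves these pullbacks.'' This is backwards. As a right adjoint, $D\colon\cdg_k^{op}\to\Lie_k$ preserves limits \emph{of $\cdg_k^{op}$}, i.e. colimits (for instance pushouts) of $\cdg_k$. The square exhibiting a square-zero extension, $A_*'\simeq A_*\times_{k\oplus k[n+1]}k$, is a pullback in $\cdg_k$, hence a \emph{pushout} in $\cdg_k^{op}$, which a right adjoint has no reason to preserve. The assertion that $D$ carries such squares to pushout squares of dglas is precisely the non-formal content of Koszul duality; in \cite{6} it is isolated as the axioms of a ``deformation theory'' and verified for the Lie-algebraic $D$ roughly as follows: one introduces the class of \emph{good} dglas (free as graded Lie algebras on finitely many generators in the appropriate strictly positive degrees), proves by an explicit cell-by-cell filtration argument that the \emph{other} unit $\mathfrak{g}\to D(C^*(\mathfrak{g}))$ is an equivalence for good $\mathfrak{g}$, and only then runs your induction in corrected form: the candidate $K=D(A_*)\sqcup_{D(k\oplus k[n+1])}D(k)$ is a pushout of good objects, hence good; $C^*(K)\simeq A_*'$ by colimit preservation of $C^*$ and the inductive hypothesis; and then $D(A_*')\simeq D(C^*(K))\simeq K$ by the good-object equivalence. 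Without this input your induction assumes exactly what it must prove; note that even your base case $D(k\oplus k[n])\simeq\free(k[-n-1])$ is an instance of the same non-formal statement, since a priori $D(k\oplus k[n])$ is characterized only by an adjunction. The same defect infects your compactness claim in (4): having perfect underlying complex is a property of the image in $\Mod_k$ and does not by itself make a dgla a compact object of $\Lie_k$ (compactness there means being a retract of a finite cell object built from free dglas, and the forgetful functor detects no such thing); in \cite{6} compactness of $D(A_*)$ is again extracted from the induction above, $D(A_*)$ being a finite homotopy pushout of finitely generated free dglas.
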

The Koszul duality mentioned in the above proposition plays a crucial role in the proof of the following central result in derived deformation theory. We refer the reader to \cite{6} for a complete treatment and to \cite{3} for a glimpse. Before stating this result, for the sake of completeness, we recall the definition of formal moduli problems which actually generalizes that of functors of artinian rings in the classical sense.
\begin{defi}\label{d2.3} A functor $X: \dg_k \rightarrow \ens$ is called a formal moduli problem if the following conditions are fulfilled.
\begin{itemize}
\item[$(1)$] The space $X(k)$ is contractible.
\item[$(2)$] For every pullback diagram 
\begin{center}
\begin{tikzpicture}[every node/.style={midway}]
  \matrix[column sep={10em,between origins}, row sep={3em}] at (0,0) {
    \node(Y){$R$} ; & \node(X) {$R_0$}; \\
    \node(M) {$R_1$}; & \node (N) {$R_{01}$};\\
  };
  
  \draw[->] (Y) -- (M) node[anchor=east]  {}  ;
  \draw[->] (Y) -- (X) node[anchor=south]  {};
  \draw[->] (X) -- (N) node[anchor=west] {};
  \draw[->] (M) -- (N) node[anchor=north] {};.
\end{tikzpicture}
\end{center}
in $\dg_k$, if $\pi_0(R_0)\rightarrow \pi_0(R_{01})\leftarrow\pi_0(R_1)$ are surjective, then the diagram of spaces 

\begin{center}
\begin{tikzpicture}[every node/.style={midway}]
  \matrix[column sep={10em,between origins}, row sep={3em}] at (0,0) {
    \node(Y){$X(R)$} ; & \node(X) {$X(R_0)$}; \\
    \node(M) {$X(R_1)$}; & \node (N) {$X(R_{01})$};\\
  };
  
  \draw[->] (Y) -- (M) node[anchor=east]  {}  ;
  \draw[->] (Y) -- (X) node[anchor=south]  {};
  \draw[->] (X) -- (N) node[anchor=west] {};
  \draw[->] (M) -- (N) node[anchor=north] {};.
\end{tikzpicture}
\end{center} is also a pullback diagram.
\end{itemize}
We denote the homotopy category of formal moduli problems by $\mathcal{FMP}$.
\end{defi}
\begin{rem} \label{r2.2} If $X$ is a formal moduli problem then the restriction of its connected components $\pi_0(X)$ on the subcategory of local artinian $k$-algebras with residue field $k$ is a ``good" functor of artinian rings in the classical sense (cf. \cite{9}).

\end{rem}
\begin{thm}\label{t2.1}\cite{6,11}
The functor \begin{align*}
\Psi:\Lie_k&\rightarrow \; \mathcal{FMP}\\
\mathfrak{g}_* &\mapsto \Map_{\Lie_k}(D(-),\mathfrak{g}_*) 
\end{align*} induces an equivalence of $\infty$-categories between the category $\Lie_k$ of differential graded Lie algebras and the category $\mathcal{FMP}$ of formal moduli problems. 
\end{thm}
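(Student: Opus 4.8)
The whole argument is powered by the Koszul-duality adjunction $C^{*}\dashv D$ recorded in Proposition \ref{p2.2}, together with the observation that $\Psi(\mathfrak{g}_{*})=\Map_{\Lie_k}(D(-),\mathfrak{g}_{*})$ is just the restriction along $D\colon\dg_k\to\Lie_k$ of the contravariant Yoneda functor $\mathfrak{g}_{*}\mapsto\Map_{\Lie_k}(-,\mathfrak{g}_{*})$. The plan is to prove three things in turn: that $\Psi$ factors through $\mathcal{FMP}$, that it is fully faithful, and that it is essentially surjective. First I would verify well-definedness in the sense of Definition \ref{d2.3}. Since $k$ is a zero object of $\cdg_k$ (initial via the unit, terminal via the augmentation), it is also terminal in $\cdg_k^{op}$, and as a right adjoint $D$ preserves terminal objects, so $D(k)\simeq 0$; hence $\Psi(\mathfrak{g}_{*})(k)=\Map_{\Lie_k}(0,\mathfrak{g}_{*})\simeq *$, giving condition $(1)$. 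For condition $(2)$, I note that $\Map_{\Lie_k}(-,\mathfrak{g}_{*})$ carries pushout squares to pullback squares of spaces, so it suffices to show that $D$ sends every pullback square in $\dg_k$ whose structure maps are surjective on $\pi_{0}$ to a pushout square in $\Lie_k$. Using the $\pi_{0}$-surjectivity one factors $R_{0}\to R_{01}\leftarrow R_{1}$ into finite towers of elementary square-zero extensions with fibre a shift of $k$, reducing to the universal small extension, for which $D$ (equivalently $C^{*}$) is computed explicitly and the pushout checked directly.

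Next I would establish full faithfulness. By Proposition \ref{p2.2}(4) the objects $D(A_{*})$, $A_{*}\in\dg_k$, are compact and generate $\Lie_k$, and by Proposition \ref{p2.2}(3) one has $C^{*}D\simeq\mathrm{id}$ on $\dg_k$. Combining these with the adjunction gives, for $A_{*},B_{*}\in\dg_k$,
\[
\Psi(D(B_{*}))(A_{*})=\Map_{\Lie_k}(D(A_{*}),D(B_{*}))\simeq\Map_{\cdg_k}(B_{*},A_{*}),
\]
so that $\Psi(D(B_{*}))\simeq\mathrm{Spf}(B_{*}):=\Map_{\cdg_k}(B_{*},-)$ is the corepresentable formal moduli problem. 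On these generators full faithfulness is exactly the co-Yoneda identification $\Map_{\mathcal{FMP}}(\mathrm{Spf}(A_{*}),\mathrm{Spf}(B_{*}))\simeq\mathrm{Spf}(B_{*})(A_{*})\simeq\Map_{\Lie_k}(D(A_{*}),D(B_{*}))$, the middle space being computed by Yoneda in $\Fun(\dg_k,\ens)$. Since every dgla is a filtered colimit of the compact objects $D(A_{*})$ and $\Psi$ preserves the relevant colimits, full faithfulness propagates from the generators to all of $\Lie_k$ by a standard compact-generation argument.

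Finally, essential surjectivity is the heart of the matter and, I expect, the main obstacle. Given $X\in\mathcal{FMP}$, I would first extract its tangent complex: the spaces $X(k\oplus k[n])$, together with the pullback axiom applied to the squares exhibiting $k\oplus k[n]$ through the elementary extensions, assemble into a spectrum, hence (as $\mathrm{char}\,k=0$) a complex $T_{X}$; the compatibility of these data with the self-gluing structure of the elementary objects should endow $T_{X}[-1]$ with the structure of a dgla $\mathfrak{g}_{X}$, in agreement with the computation that $\Psi(\mathfrak{g}_{*})(k\oplus k[n])$ recovers a shift of $\mathfrak{g}_{*}$ via the free–forgetful adjunction. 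There is then a canonical comparison $\Psi(\mathfrak{g}_{X})\to X$, and I would prove it an equivalence by induction on small extensions $A'\to A$: both functors turn such a square into a fibre sequence over the elementary base, and since the two sides agree on the objects $k\oplus k[n]$ by construction, an obstruction-theoretic dévissage forces agreement on $A'$ from agreement on $A$.

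The genuinely delicate points — constituting the main difficulty — are the construction of the Lie bracket on $\mathfrak{g}_{X}$ purely from the homotopy-coherent fibre-sequence structure of $X$, and the verification that this inductive comparison converges on all of $\dg_k$. Abstractly, these are precisely the axioms making $C^{*}$ a \emph{deformation theory} in the sense of \cite{6}; once Proposition \ref{p2.2} is read as the statement that $C^{*}$ satisfies them, the equivalence asserted in Theorem \ref{t2.1} follows from Lurie's general machinery.
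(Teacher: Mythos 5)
Your closing paragraph is, in effect, what the paper itself does: Theorem \ref{t2.1} is not proved in this paper at all, but recalled from Lurie \cite{6} (with \cite{3} cited for a glimpse), where it is obtained by verifying that the adjunction $(C^*,D)$ of Proposition \ref{p2.2} is a \emph{deformation theory} and then invoking the general equivalence theorem proved there. So deferring the hard points to that machinery matches the paper's treatment, and your verification of conditions (1) and (2) of Definition \ref{d2.3} (via $D(k)\simeq 0$ and the reduction of the pullback condition to elementary small extensions) is along the standard lines.

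The genuine gap is in your full-faithfulness step. The claim that ``every dgla is a filtered colimit of the compact objects $D(A_*)$'' is false. For $A_*\in\dg_k$ the dgla $D(A_*)$ is cohomologically concentrated in \emph{strictly positive} degrees --- the paper states exactly this at the beginning of \S\ref{s6} --- and since cohomology commutes with filtered colimits, any filtered colimit of such objects is again concentrated in strictly positive degrees. Hence no dgla with $H^0\neq 0$ (for instance the dgla governing deformations of a compact complex manifold, whose $H^0$ is its space of global vector fields, the motivating example of \S\ref{s6}) can be written as such a colimit. Proposition \ref{p2.2}(4) asserts only that the objects of $D(\dg_k)$ \emph{are} compact, not that they generate $\Lie_k$ under filtered colimits, so full faithfulness cannot be propagated from the corepresentables in the way you propose. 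This is not a repairable technicality but the crux of the theorem: precisely because the essential image of $D$ is too small to generate $\Lie_k$, Lurie's proof never argues by generators on the Lie-algebra side. Instead it works on the moduli side: $\Psi$ preserves small limits and filtered colimits and therefore admits a left adjoint; every formal moduli problem admits a smooth hypercovering by prorepresentable ones, on which everything is explicit by Proposition \ref{p2.2}(3); and the tangent-complex axiom (the tangent complex detects equivalences and commutes with sifted colimits) is what allows one to conclude that the unit and counit of the adjunction are equivalences. For the same reason, your plan for essential surjectivity --- constructing the Lie bracket on $T_X[-1]$ directly from the homotopy-coherent fibre-sequence structure of $X$ and then inducting over small extensions --- is exactly the construction that Lurie's argument is designed to circumvent; calling it ``delicate'' understates the situation, since no such direct construction is carried out in \cite{6} at all.
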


\begin{defi}\label{def2.4} \cite[Construction 2.4.7]{6}   
 Let $\mathfrak{g}_*$ be a dgLa and $V_* \in \Rep_{\mathfrak{g}_*}^{dg}$. The \emph{cohomological Chevalley-Eilenberg complex of} $\mathfrak{g}_*$ \emph{with coefficients in} $V_*$, denoted by $C^{*}(\mathfrak{g}_*,V_*)$, is defined to be the differential graded vector space of $U(\mathfrak{g}_*)$-module maps from $U(\Cn(\mathfrak{g})_*)$ into $V_*$.
\end{defi}

Observe that $C^{*}(\mathfrak{g}_*,V_*)$ has the structure of a module over the differential graded algebra $C^*(\mathfrak{g}_*)$. The action is given by $k$-bilinear maps
$$C^p(\mathfrak{g}_*)\times C^q(\mathfrak{g}_*,V_*)\rightarrow C^{p+q}(\mathfrak{g}_*,V_*)$$ which send $\lambda\in C^*(\mathfrak{g}_*)$ and $\mu\in C^q(\mathfrak{g}_*,V_*)$ to the element $\lambda\mu \in C^{p+q}(\mathfrak{g}_*,V_*)$ provided by

$$(\lambda\mu)(x_1\cdots x_n)=\sum_{S,S'}\epsilon(S,S')\lambda(x_{i_1}\cdots x_{i_m})\mu(x_{j_1}\cdots x_{i_{n-m}}),$$ as in the construction of the multiplication on $C^*(\mathfrak{g}_*)$.

Let $\mathfrak{g}_*$ be a dgLa and $\Mo^{dg}_{C^*(\mathfrak{g}_*)}$ be the category of differential graded modules over $C^*(\mathfrak{g}_*)$.
\begin{thm}\label{t2.2}\cite[Proposition 2.4.10 and Remark 2.4.11]{6} The functor \begin{align*}
C^*(\mathfrak{g}_*,-) : \Rep_{\mathfrak{g}_*}^{dg}&\rightarrow \Mo_{C^*(\mathfrak{g}_*)}^{dg} \\
V_* &\mapsto C^{*}(\mathfrak{g}_*,V_*)  
\end{align*}
 preserves weak equivalences and fibrations. Moreover, it has a left adjoint $F$ given by
 \begin{align*}
F :\Mo_{C^*(\mathfrak{g}_*)}^{dg} &\rightarrow \Rep_{\mathfrak{g}_*}^{dg} \\
M_* &\mapsto U(\Cn(\mathfrak{g})_*) \otimes_{C^*(\mathfrak{g}_*)}M_*.
\end{align*}  Thus, $C^*(\mathfrak{g}_*,-)$ is a right Quillen functor, which induces a map between $\infty$-categories $\rep_{\mathfrak{g}_*}$ and $\Mod_{C^*(\mathfrak{g}_*)}$.
 
\end{thm}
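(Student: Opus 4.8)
The plan is to recognise both functors as the two sides of a tensor--hom adjunction attached to a single bimodule, and then to reduce the two homotopical assertions to the exactness of $\Hom$ over the field $k$, the crucial input being that $U(\Cn(\mathfrak{g})_*)$ is free over $U(\mathfrak{g}_*)$.

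First I would fix notation, writing $A_*:=C^*(\mathfrak{g}_*)$, $P_*:=U(\Cn(\mathfrak{g})_*)$ and $W_*:=P_*\otimes_{U(\mathfrak{g}_*)}k$. The inclusion of dglas $\mathfrak{g}_*\hookrightarrow\Cn(\mathfrak{g})_*$ turns $P_*$ into a left $U(\mathfrak{g}_*)$-module, and the Poincaré--Birkhoff--Witt theorem (valid in characteristic zero) gives an isomorphism of left $U(\mathfrak{g}_*)$-modules $P_*\cong U(\mathfrak{g}_*)\otimes_k W_*$; in particular $P_*$ is free, hence projective, over $U(\mathfrak{g}_*)$, and the derived tensor product computing $A_*$ agrees with the naive one, so $A_*=W_*^{\vee}$.

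Next I would make the two module structures explicit through $W_*$. The free-module adjunction yields a natural isomorphism of complexes
$$C^*(\mathfrak{g}_*,V_*)=\Hom_{U(\mathfrak{g}_*)}(P_*,V_*)\cong\Hom_k(W_*,V_*).$$
The object $W_*$ is naturally a coalgebra (the Koszul-dual coalgebra of $\mathfrak{g}_*$), whose linear dual is the cdga $A_*=W_*^{\vee}$; the convolution action of $A_*=W_*^{\vee}$ on $\Hom_k(W_*,V_*)$ is exactly the $A_*$-module structure given by the multiplication formula following Definition \ref{def2.4}. Dually, $W_*$ is a right comodule over itself, hence a right $A_*$-module, and this promotes $P_*\cong U(\mathfrak{g}_*)\otimes_k W_*$ to a $(U(\mathfrak{g}_*),A_*)$-bimodule. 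With these structures in place, the adjunction $F\dashv C^*(\mathfrak{g}_*,-)$ is the classical tensor--hom adjunction for the bimodule $P_*$,
$$\Hom_{U(\mathfrak{g}_*)}(P_*\otimes_{A_*}M_*,V_*)\cong\Hom_{A_*}\!\big(M_*,\Hom_{U(\mathfrak{g}_*)}(P_*,V_*)\big),$$
natural in $M_*$ and $V_*$; here one checks that the canonical bijection is not merely $k$-linear but respects differentials and $A_*$-linearity, which is routine once the bimodule axioms are recorded.

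Finally I would dispatch the homotopical claims uniformly. Since $k$ is a field, the functor $\Hom_k(W_*,-)$ is exact: it takes degreewise surjections to degreewise surjections, because $\Hom_k(W_*^{\,p},-)$ and arbitrary products of surjections of $k$-vector spaces are surjective, whence $C^*(\mathfrak{g}_*,-)$ preserves fibrations; and it carries an acyclic complex --- which over a field is automatically contractible --- to a contractible, hence acyclic, complex, so that $C^*(\mathfrak{g}_*,-)$ preserves weak equivalences, by applying this to the mapping cone of a given quasi-isomorphism. A right adjoint preserving fibrations and all weak equivalences preserves trivial fibrations in particular, so $(F,C^*(\mathfrak{g}_*,-))$ is a Quillen adjunction and descends to the localizations to give the asserted functor $\rep_{\mathfrak{g}_*}\to\Mod_{C^*(\mathfrak{g}_*)}$ of $\infty$-categories. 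The step I expect to be the main obstacle is the middle one: correctly producing the coalgebra structure on $W_*$ and the attendant right $A_*$-module structure on $P_*$, and verifying that the tensor--hom bijection is genuinely $A_*$-linear and compatible with the differentials, rather than the homotopical properties, which are immediate once the freeness of $P_*$ over $U(\mathfrak{g}_*)$ is in hand.
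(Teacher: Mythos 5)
There is a genuine gap, and it sits exactly where you predicted it would not: in the homotopical step, not the coalgebra/bimodule step. Your claim that $C^*(\mathfrak{g}_*,V_*)=\Hom_{U(\mathfrak{g}_*)}(P_*,V_*)\cong\Hom_k(W_*,V_*)$ is a ``natural isomorphism of complexes'' is false as stated. The PBW decomposition $P_*\cong U(\mathfrak{g}_*)\otimes_k W_*$ is an isomorphism of \emph{graded} left $U(\mathfrak{g}_*)$-modules only: the shifted copy of $\mathfrak{g}_*$ inside $\Cn(\mathfrak{g})_*$ is not a subcomplex (its differential has a component $sx\mapsto x$ landing in $\mathfrak{g}_*$), so the differential of $P_*$ does not decompose as a tensor-product differential. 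Hence $\Hom_{U(\mathfrak{g}_*)}(P_*,V_*)\cong\Hom_k(W_*,V_*)$ only as graded vector spaces; the differential transported to the right-hand side is the naive one \emph{plus} a perturbation encoding the $\mathfrak{g}_*$-action on $V_*$ --- that perturbation is the entire content of the Chevalley--Eilenberg differential. (Simplest example: for $\mathfrak{g}_*=k\cdot x$ abelian in degree $0$, $C^*(\mathfrak{g}_*,V_*)$ is the two-column complex $V_*\xrightarrow{\;x\cdot\;}V_*$, not $V_*\oplus V_*[-1]$ with differential $d_V\oplus d_V$.) Because of this twist, your weak-equivalence argument collapses: if $C$ is the mapping cone of a quasi-isomorphism and $h$ is a $k$-linear contracting homotopy of $C$, then $f\mapsto h\circ f$ is a contracting homotopy of the \emph{untwisted} complex $\Hom_k(W_*,C)$, but on $\Hom_{U(\mathfrak{g}_*)}(P_*,C)$ it fails, producing error terms $u\cdot h(c)-h(u\cdot c)$ precisely because $h$ is not $U(\mathfrak{g}_*)$-linear. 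The fibration half of your argument does survive, since surjectivity is a purely graded matter; and the tensor--hom shape of the adjunction is right in outline, although for the same reason the right $C^*(\mathfrak{g}_*)$-action on $P_*$ must be defined by the dg coaction $P_*\to P_*\otimes W_*$ coming from the Hopf comultiplication of $U(\Cn(\mathfrak{g})_*)$, not by transport along the graded splitting.

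What repairs the weak-equivalence step is not graded freeness of $P_*$ but \emph{semi-freeness}. Filter $P_*$ by the dg-submodules $F_p=U(\mathfrak{g}_*)\cdot\operatorname{Sym}^{\leq p}$ (word length in the shifted copy of $\mathfrak{g}_*$); the differential preserves this filtration, and each subquotient $F_p/F_{p-1}\cong U(\mathfrak{g}_*)\otimes_k\operatorname{Sym}^{p}$ carries the honest tensor-product differential. On these subquotients your field argument applies verbatim: $\Hom_{U(\mathfrak{g}_*)}(F_p/F_{p-1},C)\cong\Hom_k(\operatorname{Sym}^p,C)$ with the naive differential, which is acyclic when $C$ is. Induction on $p$ then gives acyclicity of each $\Hom_{U(\mathfrak{g}_*)}(F_p,C)$, the restriction maps $\Hom_{U(\mathfrak{g}_*)}(F_{p+1},C)\to\Hom_{U(\mathfrak{g}_*)}(F_p,C)$ are surjective (graded splitting), and the Milnor exact sequence for the inverse limit of this tower of surjections yields acyclicity of $C^*(\mathfrak{g}_*,C)=\lim_p\Hom_{U(\mathfrak{g}_*)}(F_p,C)$. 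When $\mathfrak{g}_*$ is finite-dimensional and concentrated in degree $0$ the filtration is finite and your argument is essentially correct; in general it is infinite, and this convergence step is exactly what is missing. Note finally that the paper itself gives no proof: it cites Lurie (DAG X, Proposition 2.4.10 and Remark 2.4.11), and the proof there is precisely this filtration argument.
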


\begin{defi}\label{d2.5} Let $\mathfrak{g}_*$ be a dgLa and $V_*$ be a representation of $\mathfrak{g}_*$. 

\begin{enumerate}
\item[(1)] $V_*$ is said to be \emph{connective} if its image in $\Mod_k$ is connective: that is, the cohomology groups of the chain complex $V_*$ are concentrated in non-positive degrees. 

\item[(2)] $V_*$ is said to be \emph{cohomologically finite-dimensional} if its image in $\Mod_k$ has finite-dimensional cohomologies.

\item[(3)] $V_*$ is said to be \emph{bounded above} if its image in $\Mod_k$ is a bounded-above complex of $k$-modules.
\end{enumerate}
Let $\Mod^{cn}_{\mathfrak{g}_*}$ denote the full subcategory of $\rep_{\mathfrak{g}_*}$ spanned by the connective $\mathfrak{g}_*$-modules.
\end{defi}

\begin{thm}\label{t2.3}\cite[Proposition 2.4.16]{6} For $\mathfrak{g}_*=D(A_*)$ where $A_* \in \dg_k$, let $f$ be the corresponding $\infty$-functor of $F$ in Theorem \ref{t2.2}, then $f$ induces an equivalence of $\infty$-categories
$$\Mod_{A_*}^{cn}=\Mod_{C^*(\mathfrak{g}_*)}^{cn}\rightarrow \Mod_{\mathfrak{g}_*}^{cn}$$
which sends $M_*$ to $$U(\Cn(\mathfrak{g})_*) \otimes_{C^*(\mathfrak{g}_*)}^{\mathbb{L}} M_*,$$
i.e. $f$ is the left derived functor of $F$.
\end{thm}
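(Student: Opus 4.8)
The plan is to promote the Quillen adjunction of Theorem \ref{t2.2} to an adjunction of $\infty$-categories and then to show that it restricts to an adjoint equivalence on connective objects. First I would record that, by Proposition \ref{p2.2}(3), the unit $A_*\xrightarrow{\simeq} C^*D(A_*)=C^*(\mathfrak{g}_*)$ is an equivalence in $\dg_k$, so that the identification $\Mod_{A_*}^{cn}=\Mod_{C^*(\mathfrak{g}_*)}^{cn}$ is automatic and one may work entirely on the $C^*(\mathfrak{g}_*)$-side. By Theorem \ref{t2.2}, $C^*(\mathfrak{g}_*,-)$ is a right Quillen functor, so the induced $\infty$-functor $C^*(\mathfrak{g}_*,-)\colon \rep_{\mathfrak{g}_*}\to \Mod_{C^*(\mathfrak{g}_*)}$ admits the left adjoint $f$, the left derived functor of $F$, given on objects by $M_*\mapsto U(\Cn(\mathfrak{g})_*)\otimes^{\mathbb{L}}_{C^*(\mathfrak{g}_*)}M_*$. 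I would then check that $f$ is right $t$-exact for the natural $t$-structures (connective $=$ cohomology in non-positive degrees): since $C^*(\mathfrak{g}_*)$ lies in $\dg_k$ and tensoring connective objects over a connective ring is right $t$-exact, $f$ restricts to $f^{cn}\colon \Mod_{C^*(\mathfrak{g}_*)}^{cn}\to \Mod_{\mathfrak{g}_*}^{cn}$, whose right adjoint is $C^*(\mathfrak{g}_*,-)$ followed by the connective cover. It then suffices to prove that the unit $M\to C^*(\mathfrak{g}_*,f(M))$ is an equivalence on connective $M$ (full faithfulness) and that $f^{cn}$ is essentially surjective.

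The second step is to evaluate everything on the canonical generator. On the algebra side the free rank-one module is $C^*(\mathfrak{g}_*)$ itself, and $f(C^*(\mathfrak{g}_*))=U(\Cn(\mathfrak{g})_*)$. Since the cone $\Cn(\mathfrak{g})_*$ is acyclic, the Poincaré--Birkhoff--Witt filtration identifies $U(\Cn(\mathfrak{g})_*)$ with $\mathrm{Sym}(\Cn(\mathfrak{g})_*)$ as a complex, whose positive symmetric powers are acyclic in characteristic zero; hence $U(\Cn(\mathfrak{g})_*)\simeq k$. Restricting the $U(\mathfrak{g}_*)$-action along $\mathfrak{g}_*\hookrightarrow \Cn(\mathfrak{g})_*$ exhibits this equivalence as one of $\mathfrak{g}_*$-representations, the augmentation being $\mathfrak{g}_*$-equivariant for the trivial action, so $f(C^*(\mathfrak{g}_*))\simeq k$. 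Dually, directly from Definition \ref{def2.4}, $C^*(\mathfrak{g}_*,k)=\Hom_{U(\mathfrak{g}_*)}(U(\Cn(\mathfrak{g})_*),k)$ is precisely the Chevalley--Eilenberg complex $C^*(\mathfrak{g}_*)$ with trivial coefficients, which is already connective so that the connective cover is invisible here. Consequently the unit $C^*(\mathfrak{g}_*)\to C^*(\mathfrak{g}_*,k)=C^*(\mathfrak{g}_*)$ is an equivalence, and the generator $C^*(\mathfrak{g}_*)$ is carried to the generator $k$.

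The final step is to propagate this from the generator to all connective objects. I would argue that $C^*(\mathfrak{g}_*)$ generates $\Mod_{C^*(\mathfrak{g}_*)}^{cn}$, and that $k$ generates $\Mod_{\mathfrak{g}_*}^{cn}$ under shifts, extensions and filtered colimits; on the Lie side this is exactly where the hypothesis $\mathfrak{g}_*=D(A_*)$ with $A_*$ artinian is used, since it forces $C^*(\mathfrak{g}_*)\simeq A_*$ to be complete with respect to its augmentation ideal and the augmentation ideal of $\mathfrak{g}_*$ to act nilpotently, so that every connective representation is built from copies of $k$. Because $f$ preserves small colimits and, being a left adjoint between stable $\infty$-categories, is exact, the full subcategory of connective $M$ on which the unit is an equivalence is closed under these operations; containing the generator, it is everything, giving full faithfulness of $f^{cn}$. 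Essential surjectivity follows symmetrically: the essential image of $f^{cn}$ contains $k$ and is closed under colimits and extensions, hence is all of $\Mod_{\mathfrak{g}_*}^{cn}$.

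I expect the genuine obstacle to be this last propagation, namely the convergence of the Postnikov (equivalently bar) filtration: one must verify that no completion defect obstructs the passage from the generator to a general connective module, and that $f$ together with $C^*(\mathfrak{g}_*,-)$ preserves the limits and colimits appearing in the induction over the Postnikov tower. This is precisely the point at which the artinianness of $A_*$, through the completeness of $C^*(\mathfrak{g}_*)$ with respect to its augmentation ideal, is indispensable, whereas the $t$-exactness bookkeeping of the first step and the computation on the generator in the second step are comparatively routine.
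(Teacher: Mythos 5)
First, a point of comparison: the paper does not actually prove Theorem \ref{t2.3}; its proof is the single line ``See \cite[Chapter 2, Proposition 2.4.16]{6}'', so your attempt has to be measured against Lurie's argument, which is where the real work lives. Your first two steps are sound and consistent with what is used there (and with what this paper itself uses in the proof of Proposition \ref{p4.2}): the identification $\Mod_{A_*}^{cn}=\Mod_{C^*(\mathfrak{g}_*)}^{cn}$ via the unit equivalence, the right $t$-exactness of $f$, the computation $f(C^*(\mathfrak{g}_*))=U(\Cn(\mathfrak{g})_*)\simeq k$ as $\mathfrak{g}_*$-representations (PBW plus characteristic zero), and the identity $C^*(\mathfrak{g}_*,k)=C^*(\mathfrak{g}_*)$.

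The genuine gap is the propagation step, which you flag as an ``expected obstacle'' but never close --- and it is not a routine verification, it is the theorem. For full faithfulness, the subcategory of connective $M_*$ on which the unit $M_*\to C^*(\mathfrak{g}_*,f(M_*))$ is an equivalence is closed under a given colimit only if the \emph{right} adjoint $C^*(\mathfrak{g}_*,-)$ preserves the corresponding colimit of the images $f(M_*)$; your justification (``$f$ preserves small colimits and is exact'') addresses only the left adjoint. Exactness on both sides does give closure under finite colimits, shifts, extensions and retracts, but starting from $C^*(\mathfrak{g}_*)$ these operations reach only perfect modules; to exhaust $\Mod_{A_*}^{cn}$ you need filtered colimits, and by PBW the underlying complex of $C^*(\mathfrak{g}_*,V_*)$ is $\prod_{n\geq 0}\Hom_k(\mathrm{Sym}^n(\mathfrak{g}_*[1]),V_*)$, an infinite product in each fixed degree as soon as $H^1(\mathfrak{g}_*)\neq 0$ (i.e. $A_*\neq k$), so it has no a priori reason to commute with filtered colimits; that it nevertheless does on connective objects when $\mathfrak{g}_*=D(A_*)$ with $A_*$ artinian is exactly the convergence statement that must be proved. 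Symmetrically, essential surjectivity rests on the claim that $k$ generates $\Mod_{\mathfrak{g}_*}^{cn}$ under colimits and extensions, which you assert via ``completeness'' and ``nilpotent action of the augmentation ideal of $\mathfrak{g}_*$'' --- but a dgla has no augmentation ideal, literal nilpotence fails at the chain level, and the correct mechanism (a connectivity estimate exploiting that $D(A_*)$ is concentrated in strictly positive cohomological degrees, cf. \S\ref{s6}) is not supplied; this generation claim is essentially equivalent to the surjectivity it is meant to establish. So your proposal correctly reduces the theorem to these two statements but proves neither; they are precisely the content of \cite[Proposition 2.4.16]{6} and the results surrounding it.
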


To end this section, we recall a little bit about tensor products of representations. 
\begin{defi}\label{d1.9} Let $V_*$ and $W_*$ be two representations of $\mathfrak{g}_*$, then tensor product $V_*\otimes_k W_*$ can be considered a representation of $\mathfrak{g}_*$ with action given by the formula
$$x(v\otimes w)=(xv)\otimes w+(-1)^{pq}v\otimes (xw)$$ for homogeneous elements $x\in \mathfrak{g}_p, v\in V_q$ and $w\in W_r$. 
\end{defi}
By a general theorem of Lurie, we can prove that the construction $$W_*\mapsto V_*\otimes_kW_*$$ preserves quasi-isomorphisms. Consequently, the $\infty$-category $\rep_{\mathfrak{g}_*}$ inherits a symmetric monoidal structure. 
\begin{const} We create a new category denoted by ${\Rep_{dg}^{\otimes}}^{\otimes}$ as follows.
\begin{enumerate}
\item[(1)]  An object of ${\Rep_{dg}^{\otimes}}^{\otimes}$ is a tuple $(\mathfrak{g}_*, V^1_*,\ldots,V^n_*)$ where $\mathfrak{g}_*$ is a dgLa and each $V_*^i \in \Rep_{\mathfrak{g}_*}$.

\item[(2)] A morphism between two objects $(\mathfrak{g}_*, V^1_*,\ldots,V^m_*)$ and $(\mathfrak{h}_*, W^1_*,\ldots,W^n_*)$   in ${\Rep_{dg}^{\otimes}}^{\otimes}$ is the datum of a map $\alpha: \left<m \right> \rightarrow \left< n\right>$ in $\mathscr{F}\mathrm{in}_*$, a morphism $\phi: \mathfrak{h}_* \rightarrow \mathfrak{g}_*$ of differential graded Lie algebras  and a collection of morphisms $\lbrace f_j: \bigotimes _{\alpha(i)=j} V_*^i \rightarrow W_*^j\rbrace_{1\leq j\leq n}$ of representations of $\mathfrak{h}_*$ where each $V_*^i$ is thought of as a representation of $\mathfrak{h}_*$ by means of the morphism $\phi$.
\end{enumerate}

\end{const}
The category ${\Rep_{dg}^{\otimes}}^{\otimes}$ comes equipped naturally with a forgetful functor  \begin{align*}
 {\Rep_{dg}^{\otimes}}^{\otimes} & \rightarrow \Li_k^{op}\times \mathscr{F}\mathrm{in}_* \\
[\mathfrak{g}_*, V_*^1,\ldots, V_*^n] &\mapsto ( \mathfrak{g}_*, \left< n \right>),\end{align*} 
which induces a coCartesion fibration $\mathrm{N}({\Rep_{dg}^{\otimes}}^{\otimes}) \rightarrow \mathrm{N}(\Li_k)^{op}\times \mathrm{N}(\mathscr{F}\mathrm{in}_*)$. Thus, we have the following.

\begin{prop} \label{p2.3} \cite[Corollary 2.4.20]{6} Let $W$ be the collection of all morphisms in ${\Rep_{dg}^{\otimes}}^{\otimes}$ of the form 
$$\alpha:(\mathfrak{g}_*, V^1_*,\ldots,V^n_*)\rightarrow (\mathfrak{g}_*, W^1_*,\ldots,W^n_*) $$ where the image of $\alpha$ in $\Li_k$ and $\mathscr{F}\mathrm{in}_*$ is an identity and $\alpha$ induces a quasi-isomorphism $V^i_* \rightarrow W_*^i$ for $1\leq i \leq n$. Then, we have a coCartesian fibration ${\Rep_{dg}^{\otimes}}^{\otimes}[W^{-1}] \rightarrow \mathrm{N}(\Li_k)^{op}\times \mathrm{N}(\mathscr{F}\mathrm{in}_*)$ of which the fiber over any given differential graded Lie algebra $\mathfrak{g}_*$ can be identified with the symmetric monoidal $\infty$-category $\rep_{\mathfrak{g}_*}$.
 \end{prop}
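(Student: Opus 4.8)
The plan is to realize ${\Rep_{dg}^{\otimes}}^{\otimes}[W^{-1}]\to \mathrm{N}(\Li_k)^{op}\times \mathrm{N}(\fin)$ as the fiberwise localization of the coCartesian fibration $p\colon \mathrm{N}({\Rep_{dg}^{\otimes}}^{\otimes})\to \mathrm{N}(\Li_k)^{op}\times \mathrm{N}(\fin)$ constructed just above, at the class $W$, and then to invoke Lurie's machinery for localizing a (co)Cartesian fibration at a fiberwise class of weak equivalences (cf. \cite{7}, \cite{8}). The general principle to be applied is the following: if $q\colon X\to S$ is a coCartesian fibration and $W$ is a collection of edges of $X$ such that (i) $q$ carries $W$ to degenerate edges of $S$, and (ii) $W$ is stable under coCartesian transport, then the localization $X[W^{-1}]\to S$ is again a coCartesian fibration whose fiber over a vertex $s$ is the localization $X_s[W_s^{-1}]$ of the fiber at the induced class $W_s=W\cap X_s$.

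Condition (i) holds by the very definition of $W$: every $\alpha\in W$ projects to an identity in both $\Li_k$ and $\fin$, hence to a degenerate edge of $\mathrm{N}(\Li_k)^{op}\times \mathrm{N}(\fin)$. The heart of the matter is therefore condition (ii), and there are two types of coCartesian transport to control, one for each factor of the base. Along the $\Li_k^{op}$-direction, the transport attached to a morphism $\mathfrak{g}_*\to\mathfrak{h}_*$ of dglas is restriction of scalars, which leaves the underlying chain complex of a representation untouched; it therefore sends quasi-isomorphisms to quasi-isomorphisms, i.e. preserves $W$. Along the $\fin$-direction, the transport is given by the tensor products $\otimes_k$ of Definition \ref{d1.9}; since $k$ is a field, $\otimes_k$ is exact, so tensoring with a fixed representation preserves quasi-isomorphisms — this is exactly the fact noted immediately after Definition \ref{d1.9}. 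Combining the two, $W$ is stable under all coCartesian transport, which establishes (ii).

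With (i) and (ii) in hand, the localization theorem produces a coCartesian fibration ${\Rep_{dg}^{\otimes}}^{\otimes}[W^{-1}]\to \mathrm{N}(\Li_k)^{op}\times \mathrm{N}(\fin)$ whose fiber over $(\mathfrak{g}_*,\langle n\rangle)$ is the localization of the fiber $(\Rep_{\mathfrak{g}_*}^{dg})^{\times n}$ at componentwise quasi-isomorphisms. By the combinatorial model structure established above (weak equivalences being the quasi-isomorphisms), this Dwyer–Kan localization is precisely $(\rep_{\mathfrak{g}_*})^{\times n}$. Moreover the Segal maps $\rho^i$ are compatible with the product decomposition of the fibers and hence survive the localization, so the defining equivalences $\mathscr{C}^{\otimes}_{\langle n\rangle}\simeq(\mathscr{C}^{\otimes}_{\langle 1\rangle})^{n}$ of a symmetric monoidal $\infty$-category persist after inverting $W$. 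Restricting the base to $\{\mathfrak{g}_*\}\times \mathrm{N}(\fin)$, we thus obtain a coCartesian fibration over $\mathrm{N}(\fin)$ satisfying the Segal condition, i.e. a symmetric monoidal $\infty$-category, whose underlying $\infty$-category (the fiber over $\langle 1\rangle$) is $\rep_{\mathfrak{g}_*}$ and whose monoidal product is induced by $\otimes_k$. This is the asserted identification.

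The main obstacle is the verification of (ii) in the tensor direction, together with the check that the localization remains genuinely fiberwise and respects the Segal maps $\rho^i$ so that products are preserved after inverting $W$. The exactness of $\otimes_k$ over a field is precisely what makes the tensor transport preserve $W$; over a general base ring this would fail and one would be forced to replace $\otimes_k$ by a derived tensor product, so the hypothesis that $k$ is a field is essential here. Once compatibility with the $\rho^i$ is confirmed, the symmetric monoidal structure on each $\rep_{\mathfrak{g}_*}$ and its functoriality in $\mathfrak{g}_*$ follow formally from the coCartesian fibration structure.
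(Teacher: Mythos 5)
Your proposal is correct and takes essentially the same approach as the paper, which gives no argument of its own and simply defers to Lurie (the cited ``Corollary 2.4.20'' is the corresponding statement of \emph{Derived Algebraic Geometry X}, whose proof is exactly the fiberwise Dwyer--Kan localization of the coCartesian fibration that you describe). The two compatibility checks you isolate---restriction of scalars leaves underlying complexes untouched, and $\otimes_k$ preserves quasi-isomorphisms since $k$ is a field---are precisely the hypotheses needed to invoke that localization machinery, so nothing essential is missing.
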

\section{Quasi-coherent sheaves on formal moduli problems} In this section, the notion of quasi-coherent sheaves on formal moduli problems is briefly introduced. The interested reader is referred to \cite[Chapter 2.4]{6} for more details. 

Recall first that $\cat$ is the $\infty$-category of small $\infty$-categories and $\widehat{\cat}$ is that of $\infty$-categories not necessarily small (cf. \cite[Chapter III]{8}). Let $\Mod_k$ be the $\infty$-category of chain complexes of $k$-modules. By the construction at the end of $\S2$, we obtain a coCartesian fibration $\Mod(\Mod_k)^{\otimes}  \rightarrow \cdg_k \times \mathrm{N}(\mathscr{F}\mathrm{in}_*)$ being further classified by a map $\chi: \cdg_k \rightarrow \Calg(\widehat{\cat})$ which to each commutative differential graded algebra $A_*$, associates the symmetric monoidal $\infty$-category $\Mod_{A_*}$ of modules over $A_*$. More precisely, the map $\chi$ is just the image of the coCartesian fibration under the straightening functor given in \cite[Theorem 3.2.0.1]{8} (see also \cite{HHR25} for a more convenient version). Restricting this functor on the full sub-category $\dg_k$ of $\cdg_k$, we obtain another one denoted by $\chi^{\mathrm{sm}}$. An easy application of \cite[Theorem 5.1.5.6]{8} gives us an essentially unique fatorization of $\chi^{\mathrm{sm}}$ as a composition

\begin{center}
\begin{tikzpicture}[every node/.style={midway}]
  \matrix[column sep={12em,between origins}, row sep={4em}] at (0,0) {
    \node(Y){$\cdg_k$} ; & \node(X) {$\Calg(\widehat{\cat})$}; \\
    \node(M) {$\dg_k$}; & \node (N) {$\Fun(\dg_k,\ens) ^{op}$};\\
  };
  
  \draw[->] (M) -- (Y) node[anchor=east]  {}  ;
  \draw[->] (Y) -- (X) node[anchor=south]  {$\chi$};
  \draw[->] (N) -- (X) node[anchor=west] {$\qc$};
  \draw[->] (M) -- (N) node[anchor=north] {$\mathcal{Y}$};.
  \draw[->] (M) -- (X) node[anchor=south] {$\chi^{\mathrm{sm}}$};.
\end{tikzpicture}
\end{center}
 where  $\mathcal{Y}$ is the Yoneda embedding functor. We denote the rightmost vertical one by $\qc$. Note that $\qc$ also preserves colimits and for every functor $X: \dg_k \rightarrow \ens$ (in particular, a formal moduli problem), we obtain a symmetric monoidal $\infty$-category $\qc(X)$ to which we refer the $\infty$-category of quasi-coherent sheaves on $X$. We recollect some fundamental properties of $\qc$.

\begin{prop} \label{p3.1}\cite[Remark 2.4.26]{6} \begin{enumerate}
\item[(a)] For every functor $X: \dg_k \rightarrow \ens$, the $\infty$-category $\qc(X)$ is presentable and the tensor product $\otimes: \qc(X) \times \qc(X) \rightarrow \qc(X)$ preserves small colimits separately in each variable.
\item[(b
)] Every natural transformation $f: X \rightarrow Y$ of functors in $\Fun(\dg_k,\ens)$ induces a functor $f^*: \qc(Y) \rightarrow \qc(X)$ preserving small colimits.
\end{enumerate}
\end{prop}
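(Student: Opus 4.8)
The plan is to reduce both assertions to the representable case and then propagate them through the colimit-presentation of an arbitrary functor, exploiting the fact that $\qc$ is, by construction, the essentially unique extension of $\chi^{\mathrm{sm}}$ furnished by \cite[Theorem 5.1.5.6]{8}, and therefore converts colimits of functors into limits of symmetric monoidal $\infty$-categories. The decisive structural input will be that presentable $\infty$-categories with colimit-preserving tensor products are stable under such limits.

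First I would treat the representable case. For $X=\mathcal{Y}(A_*)$ with $A_*\in\dg_k$ we have $\qc(X)\simeq\Mod_{A_*}$ by construction. The $\infty$-category of modules over a cdga is presentable and carries a symmetric monoidal structure whose tensor product $\otimes_{A_*}$ preserves small colimits separately in each variable (\cite[Chapter 3, 4]{7}); thus $\qc(\mathcal{Y}(A_*))$ lies in $\Calg(\mathrm{Pr}^{L})$, the $\infty$-category of presentable symmetric monoidal $\infty$-categories with colimit-preserving tensor. This settles (a) on representables. Furthermore, a morphism $\mathcal{Y}(B_*)\to\mathcal{Y}(A_*)$, i.e.\ a cdga map $\phi\colon A_*\to B_*$, induces on $\qc$ the base-change functor $B_*\otimes_{A_*}(-)\colon\Mod_{A_*}\to\Mod_{B_*}$, which is a left adjoint and hence colimit-preserving; this is (b) on representables.

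For a general $X\in\Fun(\dg_k,\ens)$, I would invoke the density theorem to write $X\simeq\coli_{\alpha}\mathcal{Y}(A_*^{\alpha})$ as a small colimit of representables. Since $\qc$ arises from $\chi^{\mathrm{sm}}$ via the universal property of \cite[Theorem 5.1.5.6]{8}, it converts this colimit presentation into the corresponding limit of $\infty$-categories:
\[ \qc(X)\;\simeq\;\lim_{\alpha}\qc(\mathcal{Y}(A_*^{\alpha}))\;\simeq\;\lim_{\alpha}\Mod_{A_*^{\alpha}}. \]
The transition functors in this diagram are the base-change functors from the previous step, all colimit-preserving, so the diagram takes values in $\mathrm{Pr}^{L}$. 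The key fact is then that $\mathrm{Pr}^{L}$ admits all small limits and that the forgetful functor $\mathrm{Pr}^{L}\to\widehat{\cat}$ preserves them (\cite[5.5.3.13, 5.5.3.18]{8}); consequently $\qc(X)$ is presentable. The symmetric monoidal refinement follows because $\Calg(\mathrm{Pr}^{L})$ is closed under these limits, which are created in $\mathrm{Pr}^{L}$, and the property ``$\otimes$ preserves colimits separately in each variable'' is checked on the componentwise tensor products, each of which has it. This proves (a).

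For (b) in general, a natural transformation $f\colon X\to Y$ is a morphism $Y\to X$ in $\Fun(\dg_k,\ens)^{op}$, and functoriality of $\qc$ yields $f^{*}:=\qc(f)\colon\qc(Y)\to\qc(X)$. Since $\qc$ factors through $\Calg(\mathrm{Pr}^{L})$, whose morphisms are by definition colimit-preserving symmetric monoidal functors, every such $f^{*}$ automatically preserves small colimits, establishing (b). I expect the main obstacle to be precisely the bookkeeping that secures this factorization: one must check that the Yoneda extension of $\chi^{\mathrm{sm}}$ genuinely lands in $\Calg(\mathrm{Pr}^{L})$ rather than merely in $\Calg(\widehat{\cat})$ --- equivalently, that the transition functors in the defining limit are left adjoints and that $\mathrm{Pr}^{L}$ is stable under the relevant small limits inside $\widehat{\cat}$, with the attendant care about variance. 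Once this is in place, presentability of $\qc(X)$, colimit-preservation of $\otimes$, and colimit-preservation of each $f^{*}$ all follow formally, since these are properties inherited by limits taken in $\mathrm{Pr}^{L}$.
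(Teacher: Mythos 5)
Your proposal is correct and takes essentially the same route as the paper: the paper states Proposition \ref{p3.1} without proof, deferring to Lurie (its reference [6], Chapter 2.4), and the argument there is precisely your reduction --- $\qc(\mathcal{Y}(A_*))\simeq\Mod_{A_*}$ on representables, the Kan-extension property of $\qc$ from \cite[Theorem 5.1.5.6]{8} converting the colimit presentation of $X$ into a small limit of module categories along base-change (left adjoint) functors, and stability of $\mathrm{Pr}^{L}$ and $\Calg(\mathrm{Pr}^{L})$ under small limits computed in $\widehat{\cat}$, which yields both presentability with colimit-preserving tensor and the colimit-preservation of every $f^{*}$. The one point you flag as the ``main obstacle'' (that $\qc$ genuinely factors through $\Calg(\mathrm{Pr}^{L})$) is indeed the only bookkeeping step, and it is handled exactly as you suggest.
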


\begin{rem}\label{r3.1} Intuitively, if $X$ is a formal moduli problem, then a quasi-coherent sheaf $\mathscr{F}\in\qc(X)$ is a rule which to each point $\mu \in X(A_*)$ associates an $A_*$-module $\mathscr{F}_{\nu}$ and to each morphism $f: A_* \rightarrow A'_*$ which sends $\mu$ to $\mu' \in X(A'_*)$ associates an equivalence $\mathscr{F}_{\mu'} \cong 
A'_*\otimes_{A_*} \mathscr{F}_\mu$. In particular, when $X$ is representable by an object $A_* \in \dg_k$, then $\qc(X)$ is nothing but the $\infty$-category $\Mod_{A_*}$ of modules over $A_*$. 
\end{rem}
Once again by \cite{8,HHR25}, the coCartesian fibration  ${\Rep_{dg}^{\otimes}}^{\otimes}[W^{-1}] \rightarrow \mathrm{N}(\Li_k)^{op}\times \mathrm{N}(\mathscr{F}\mathrm{in}_*)$ in Proposition \ref{p2.3} is classified also by a functor 
\begin{align*}
\bar{\chi} :\Lie_k^{op} &\rightarrow \Calg(\widehat{\cat})\\
\mathfrak{g}_* &\mapsto \rep_{\mathfrak{g}_*}.
\end{align*}
Composing $\bar{\chi}$ with the Koszul functor $D$ in Proposition \ref{p2.2}, we obtain another functor from $ \dg_k$ to $ \Calg(\widehat{\cat})$, denoted by $\chi_{!}^{\mathrm{sm}}$. Once again, applying \cite[Theorem 5.1.5.6]{8} provides an essentially unique factorization of $ \chi_{!}^{\mathrm{sm}}$
\begin{center}
\begin{tikzpicture}[every node/.style={midway}]
  \matrix[column sep={12em,between origins}, row sep={4em}] at (0,0) {
    \node(Y){$\Lie_k^{op}$} ; & \node(X) {$\Calg(\widehat{\cat})$}; \\
    \node(M) {$\dg_k$}; & \node (N) {$\Fun(\dg_k,\ens) ^{op}$};\\
  };
  
  \draw[->] (M) -- (Y) node[anchor=east]  {$D$}  ;
  \draw[->] (Y) -- (X) node[anchor=south]  {$\bar{\chi}$};
  \draw[->] (N) -- (X) node[anchor=west] {$\qc_!$};
  \draw[->] (M) -- (N) node[anchor=north] {$\mathcal{Y}$};.
  \draw[->] (M) -- (X) node[anchor=south] {$\chi^{\mathrm{sm}}_{!}$};.
\end{tikzpicture}
\end{center}
Like $\qc$, the functor $\qc_!$ enjoys  the properties in Proposition \ref{p3.1} as well.

A remark is in order. For any $A_*\in\dg_k$, by Proposition \ref{p2.2}, the unit map $A_* \rightarrow C^*(D(A_*))$ is an equivalence. Therefore, the functor $\chi^{\mathrm{sm}}$ fits also into the following commutative diagram
\begin{center}
\begin{tikzpicture}[every node/.style={midway}]
  \matrix[column sep={12em,between origins}, row sep={4em}] at (0,0) {
\node(A){$\Lie_k^{op}$} ; &    \node(Y){$\cdg_k$} ; & \node(X) {$\Calg(\widehat{\cat})$}; \\
 \node(B){} ; &   \node(M) {$\dg_k$}; & \node (N) {$\Fun(\dg_k,\ens) ^{op}$.};\\
  };
  \draw[->] (A) -- (Y) node[anchor=south]  {$C^*$}  ;
  \draw[->] (A) to[bend left] node[midway,above] {$\bar{\chi}$} (X) ;
  \draw[->] (M) -- (Y) node[midway]  {}  ;
  \draw[->] (M) -- (A) node[anchor=south]  {$D$}  ;
  \draw[->] (Y) -- (X) node[anchor=south]  {$\chi$};
  \draw[->] (N) -- (X) node[anchor=west] {$\qc$};
  \draw[->] (M) -- (N) node[anchor=north] {$\mathcal{Y}$};.
  \draw[->] (M) -- (X) node[anchor=south] {$\chi^{\mathrm{sm}}$};.
\end{tikzpicture}
\end{center} The functor $f$ in Theorem \ref{t2.3} defines a natural transformation of functors $\chi^{\mathrm{sm}} \rightarrow \chi^{\mathrm{sm}}_!$ which for each $A_* \in \dg_k$, induces a fully faithful embedding whose restriction on connective objects is an $\infty$-equivalence.
\begin{center}
\begin{tikzpicture}[every node/.style={midway}]
  \matrix[column sep={12em,between origins}, row sep={4em}] at (0,0) {
    \node(Y){$\chi^{\mathrm{sm}}(A_*)\simeq \Mod_{A_*}$} ; & \node(X) {$\chi^{\mathrm{sm}}_!(A_*)\simeq\rep_{D(A_*)}$}; \\
    \node(M) {$\Mod_{A_*}^{cn}$}; & \node (N) {$\Mod_{D(A_*)}^{cn}.$};\\
  };
  
  \draw[->] (M) -- (Y) node[anchor=east]  {}  ;
  \draw[right hook->] (Y) -- (X) node[anchor=south]  {};
  \draw[->] (N) -- (X) node[anchor=west] {};
  \draw[->] (M) -- (N) node[anchor=south] {$\simeq$};.
 
\end{tikzpicture}
\end{center}
Consequently, we attain also a natural transformation between $\qc$ and $\qc_!$ and for each $X \in \Fun(\dg_k,\ens) ^{op} $, the following fully faithful embedding is available
$$\qc(X) \hookrightarrow \qc_!(X).$$ Surprisingly, when $X$ is a formal moduli problem, $\qc_!(X)$ has an extremely explicit presentation: they are merely differential graded modules over its associated differential graded Lie algebra (cf. \cite[Proposition 2.4.31]{6}).
\begin{prop} \cite[Proposition 2.4.31]{6} Let $X$ be a formal moduli problem and $\mathfrak{g}_*$ its associated governing differential graded Lie algebra. Then there exists a canonical equivalence $$\qc_!(X)\simeq \rep_{\mathfrak{g}_*}$$ 
 of symmetric monoidal $\infty$-categories
\end{prop}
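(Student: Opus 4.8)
The plan is to move the problem to the Lie-theoretic side through Lurie's equivalence (Theorem \ref{t2.1}) and then recognize the two candidate constructions as one and the same left Kan extension. Writing $X\simeq\Psi(\mathfrak{g}_*)$, the assertion becomes an equivalence of functors $\Lie_k^{op}\to\Calg(\widehat{\cat})$, namely $\qc_!\circ\Psi^{op}\simeq\bar{\chi}$, where $\bar{\chi}(\mathfrak{g}_*)=\rep_{\mathfrak{g}_*}$ is the functor underlying Proposition \ref{p2.3}; the symmetric monoidal refinement will come for free, since every comparison below is carried out inside $\Calg(\widehat{\cat})$.

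First I would settle the representable case. For $A_*\in\dg_k$, Koszul duality is fully faithful (Proposition \ref{p2.2}(4)), so $\Psi(D(A_*))(B_*)=\Map_{\Lie_k}(D(B_*),D(A_*))\simeq\Map_{\dg_k}(A_*,B_*)$; that is, $\Psi(D(A_*))\simeq\mathcal{Y}(A_*)$ is the corepresentable formal moduli problem. By the very definition of $\qc_!$ as the factorization of $\chi_!^{\mathrm{sm}}$ through the Yoneda embedding, $\qc_!(\mathcal{Y}(A_*))\simeq\chi_!^{\mathrm{sm}}(A_*)=\rep_{D(A_*)}=\bar{\chi}(D(A_*))$, so the two functors agree after restriction along $D\colon\dg_k\to\Lie_k^{op}$.

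The formal heart is to promote this agreement to all of $\Lie_k^{op}$. Since $\mathcal{Y}=\Psi^{op}\circ D$ and, by \cite[Theorem 5.1.5.6]{8}, $\qc_!$ is the pointwise colimit-preserving left Kan extension of $\chi_!^{\mathrm{sm}}$ along $\mathcal{Y}$, compatibility of left Kan extensions with composition yields $\qc_!\simeq\mathrm{Lan}_{\Psi^{op}}(\mathrm{Lan}_D\,\chi_!^{\mathrm{sm}})$. As $\Psi$ is an equivalence, $\Psi^{op}$ is fully faithful, and restricting a pointwise left Kan extension along a fully faithful functor recovers the original; hence $\qc_!\circ\Psi^{op}\simeq\mathrm{Lan}_D\,\chi_!^{\mathrm{sm}}$. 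Unwinding the pointwise formula and identifying the comma category through full faithfulness of $\Psi$ (a morphism $\Psi(\mathfrak{g}_*)\to\mathcal{Y}(A_*)=\Psi(D(A_*))$ is the same datum as a map $\mathfrak{g}_*\to D(A_*)$ in $\Lie_k$), this reads
$$\qc_!(\Psi(\mathfrak{g}_*))\simeq\coli_{(A_*,\,\mathfrak{g}_*\to D(A_*))}\rep_{D(A_*)},$$
a colimit in $\Calg(\widehat{\cat})$ over the category of maps from $\mathfrak{g}_*$ into Koszul duals, the transition functors being the evident restrictions.

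It remains to identify this colimit with $\rep_{\mathfrak{g}_*}$, equivalently to prove that $\bar{\chi}$ is itself left Kan extended from the subcategory $D(\dg_k)\subseteq\Lie_k^{op}$. The restriction functors $\rep_{D(A_*)}\to\rep_{\mathfrak{g}_*}$ attached to each $\mathfrak{g}_*\to D(A_*)$ organize into the canonical comparison, and I would prove it is an equivalence by resolving $\mathfrak{g}_*$ through Koszul duals: the objects $D(A_*)$ are essentially compact (Proposition \ref{p2.2}(4)) and generate $\Lie_k$ under colimits, so that, using Koszul duality (Proposition \ref{p2.2}) and the unit equivalence $A_*\simeq C^*D(A_*)$, every dgla is exhibited as a colimit of such objects; the representation functor then carries this presentation to the matching colimit of module $\infty$-categories through the adjunction $F\dashv C^*(\mathfrak{g}_*,-)$ of Theorem \ref{t2.2}. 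This descent compatibility of $\mathfrak{g}_*\mapsto\rep_{\mathfrak{g}_*}$ — that the representation $\infty$-category of a dgla is assembled out of those of its compact Koszul-dual approximations — is the step I expect to be the main obstacle; everything else is formal manipulation of Kan extensions together with the equivalence $\Psi$, and since all of it already takes place in $\Calg(\widehat{\cat})$, the result is an equivalence of symmetric monoidal $\infty$-categories.
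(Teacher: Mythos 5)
The paper itself offers no proof of this proposition: it is quoted from Lurie's DAG-X (\cite[Proposition 2.4.31]{6}; the bracketed citation to \cite{8} in the paper is a slip), so your attempt has to be measured against that argument. Your representable case is correct: $\Psi(D(A_*))\simeq\mathcal{Y}(A_*)$ and $\qc_!(\mathcal{Y}(A_*))\simeq\rep_{D(A_*)}$. The trouble starts with the pointwise formula you extract from the Kan extension: its variance is backwards. The factorization of $\chi_!^{\mathrm{sm}}$ through the Yoneda embedding provided by \cite[Theorem 5.1.5.6]{8} produces a functor that converts colimits of functors $\dg_k\rightarrow\ens$ into \emph{limits} in $\Calg(\widehat{\cat})$; writing $X=\Psi(\mathfrak{g}_*)$ as the canonical colimit of corepresentables indexed by its points, one gets that $\qc_!(X)$ is the \emph{limit} of $\rep_{D(A_*)}$ over pairs $(A_*,\,D(A_*)\rightarrow\mathfrak{g}_*)$, i.e.\ over maps of representables \emph{into} $X$. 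This is exactly what Remark \ref{r3.1} encodes: a quasi-coherent sheaf is a compatible family of stalks, one for each point $\eta\in X(A_*)$, hence an object of a limit. Your comma category instead consists of maps $\Psi(\mathfrak{g}_*)\rightarrow\mathcal{Y}(A_*)$, which by the adjunction $C^*\dashv D$ of Proposition \ref{p2.2} are the same as maps $A_*\rightarrow C^*(\mathfrak{g}_*)$: these parametrize $A_*$-valued \emph{functions} on $X$, not points of $X$, and the colimit of $\rep_{D(A_*)}$ over them is not $\qc_!(X)$. The mismatch is visible inside your own plan: the presentation of $\mathfrak{g}_*$ as a colimit of compact objects $D(A_*)$ that you invoke at the end necessarily uses maps $D(A_*)\rightarrow\mathfrak{g}_*$, the opposite of the indexing your Kan-extension formula produced.

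Second, the step you explicitly defer --- that $\mathfrak{h}_*\mapsto\rep_{\mathfrak{h}_*}$ carries this presentation of $\mathfrak{g}_*$ to the ``matching'' (co)limit of module categories --- is the entire mathematical content of the proposition, and it is not formal. Since $\rep_{\mathfrak{h}_*}\simeq\Mod_{U(\mathfrak{h}_*)}$ and the enveloping-algebra functor preserves colimits, your claim would amount to saying that module categories convert colimits of algebras into limits of $\infty$-categories along restriction functors; this is false in general (already $\Mod_{(-)}$ fails to take pushouts of associative algebras to pullbacks of categories). What makes it true in the present situation is special: one must trade the limit along restriction functors for a colimit along their left adjoints (induction), use that $B\mapsto\Mod_B$ with base-change functoriality preserves colimits, and use that $\mathfrak{g}_*$ is the colimit of $D(A_*)$ over its category of points --- which is essentially Lurie's argument, and none of which follows from the adjunction $F\dashv C^*(\mathfrak{g}_*,-)$ of Theorem \ref{t2.2} alone. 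As written, your proposal establishes only the (easy) agreement on representables.
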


Finally, we introduce the notion of connectivity on quasi-coherent sheaves. Let $X$ be a formal moduli problem and $\mathscr{F}$ be in $\qc(X)$. $\mathscr{F}$ is said to be \emph{connective} if for any $A\in \dg_k$ and any $\mu \in X(A)$ each ``stalk'' $\mathscr{F}_{\mu} \in \Mod_{A}$ is connective. If we denote by $\qc(X)^{cn}$ the full subcategory  of $\qc(X)$ generated by the connective objects, $\qc(X)^{cn}$ is a presentable $\infty$-category closed under colimits and extension in $\qc(X)$. This section ends with the following result.
\begin{prop}\cite[Proposition 2.4.34]{6}\label{p3.3} Let $X$ be a formal moduli problem and $\mathfrak{g}_*$ its associated governing differential graded Lie algebra. Then there exists a canonical equivalence  $$\qc(X)^{cn} \overset{\simeq}{\rightarrow} \Mod_{\mathfrak{g}_*}^{cn}$$ of symmetric monoidal $\infty$-categories.
\end{prop}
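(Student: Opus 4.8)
The plan is to bootstrap the equivalence from the representable case, where it is precisely Theorem \ref{t2.3}, to an arbitrary formal moduli problem $X$ by descent. The point of departure is the stalkwise description of quasi-coherent sheaves in Remark \ref{r3.1}: an object of $\qc(X)$ is the datum of a compatible family of $A_*$-modules $\mathscr{F}_\mu$ indexed by the points $\mu\in X(A_*)$, $A_*\in\dg_k$, subject to the base-change coherences $\mathscr{F}_{\mu'}\simeq A'_*\otimes_{A_*}\mathscr{F}_\mu$. Equivalently, writing the index category of such pairs $(A_*,\mu)$ as $\int X$, one has
\[
\qc(X)\simeq \lim_{(A_*,\mu)\in \int X}\Mod_{A_*}.
\]
The same description applied to $\qc_!$ gives $\qc_!(X)\simeq \lim_{(A_*,\mu)}\Mod_{D(A_*)}$, which by the preceding proposition is identified with $\rep_{\mathfrak{g}_*}=\Mod_{\mathfrak{g}_*}$, where $\mathfrak{g}_*$ is the dgla governing $X$.

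Next I would check that connectivity is compatible with these limit presentations. By definition $\mathscr{F}\in\qc(X)$ is connective exactly when every stalk $\mathscr{F}_\mu\in\Mod_{A_*}$ is connective, and since each transition functor is a base change $A'_*\otimes_{A_*}(-)$ — which is right $t$-exact and in particular carries connective modules to connective modules (cf. Proposition \ref{p3.1}(b)) — the connective objects form a full sub-diagram of $(A_*,\mu)\mapsto \Mod_{A_*}$. Consequently the connective part of the limit is the limit of the connective parts, $\qc(X)^{cn}\simeq \lim_{(A_*,\mu)}\Mod_{A_*}^{cn}$. Running the identical argument on the $\qc_!$ side, and using that the ``underlying complex in $\Mod_k$'' notion of connectivity of Definition \ref{d2.5} coincides with the stalkwise one (both are read off at the base point $\mu_0\in X(k)$), yields $\Mod_{\mathfrak{g}_*}^{cn}\simeq \lim_{(A_*,\mu)}\Mod_{D(A_*)}^{cn}$.

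It then remains to assemble Theorem \ref{t2.3} across the diagram. For each pair $(A_*,\mu)$ the functor $f$ of that theorem is an equivalence $\Mod_{A_*}^{cn}\xrightarrow{\;\simeq\;}\Mod_{D(A_*)}^{cn}$, and these equivalences are precisely the components of the natural transformation $\chi^{\mathrm{sm}}\to\chi^{\mathrm{sm}}_!$ restricted to connective objects (the lower equivalence in the last commutative square displayed above); being a natural equivalence of $\int X$-indexed diagrams, it induces an equivalence on limits. Chaining the identifications above gives
\[
\qc(X)^{cn}\simeq \lim_{(A_*,\mu)}\Mod_{A_*}^{cn}\xrightarrow{\;\simeq\;}\lim_{(A_*,\mu)}\Mod_{D(A_*)}^{cn}\simeq \Mod_{\mathfrak{g}_*}^{cn}.
\]
For the symmetric monoidal refinement I would observe that every functor in sight lives over $\Calg(\widehat{\cat})$, so all the equivalences are symmetric monoidal, while $\qc(X)^{cn}$ and $\Mod_{\mathfrak{g}_*}^{cn}$ are symmetric monoidal subcategories because connectivity is stable under $\otimes_k$ by the Künneth formula over the field $k$.

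The main obstacle is the second paragraph: one must make precise that the full subcategories of connective objects are genuinely stable under the base-change transition functors and glue to the connective subcategory of the total limit, and — more delicately — that under the identification $\qc_!(X)\simeq\rep_{\mathfrak{g}_*}$ the stalkwise connectivity matches the underlying-complex connectivity of Definition \ref{d2.5}. Once this compatibility of the $t$-structures with descent is in place, the statement follows formally from Theorem \ref{t2.3} together with the colimit-preservation of $\qc$ and $\qc_!$.
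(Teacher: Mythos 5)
Your proposal is correct and takes essentially the same route as the paper: the paper gives no argument of its own for this proposition, deferring to Lurie (DAG X, Proposition 2.4.34), and the proof there is precisely your descent scheme — present $\qc(X)$ and $\qc_!(X)$ as limits of $\Mod_{A_*}$, resp. $\Mod_{D(A_*)}$, over the points of $X$, identify the connective parts with the limits of the connective subdiagrams (using that connectivity is stalkwise and unchanged by restriction along $D(A_*)\rightarrow\mathfrak{g}_*$), and assemble the pointwise equivalences of Theorem \ref{t2.3} via the natural transformation $\chi^{\mathrm{sm}}\rightarrow\chi^{\mathrm{sm}}_!$. The only cosmetic slip is citing Proposition \ref{p3.1}(b) for preservation of connectivity under base change — that item concerns colimit preservation; connectivity is preserved because $A'_*\otimes_{A_*}-$ is right $t$-exact over connective algebras — which does not affect the argument.
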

\begin{rem} Through the rest of the paper, by quasi-coherent sheaves, we really mean connective quasi-coherent sheaves.
\end{rem}
\section{Cohomology jump loci and differential graded Lie algebras} \label{s4} The main reference for this section is \cite{1} to which the interested reader is referred.
\subsection{Cohomology jump loci of complexes} Let $R$ be a noetherian ring and let $E^{\bullet}$ be an above-bounded complex of $R$-modules, whose cohomologies are finitely generated $R$-modules. It is well-known that there exists a bounded above complex of finitely generated free $R$-modules which is
 quasi-isomorphic to $E^{\bullet}$. Let $F^{\bullet}$ be such a complex. For each $i$, consider the map $$d_{i-1}\oplus d_{i}:  F^{i-1}\oplus F^{i} \rightarrow F^i\oplus F^{i+1}.$$

\begin{defi}\label{d4.1} \cite[Definition-Proposition 2.2]{1} The cohomology jump ideals are defined to be $$J_{k}^i(E^{\bullet})=I_{\mathrm{rank}(F^i)-k+1}(d_{i-1}\oplus d_{i})$$ where $I_{\mathrm{rank}(F^i)-k+1}(d_{i-1}\oplus d_{i})$ is the ideal of $R$, generated by minors of size $\mathrm{rank}(F^i)-k+1$ of the representing matrix of $d_{i-1}\oplus d_{i}$. Moreover, $J_{k}^i(E^{\bullet})$ is independent of the choice of $F^{\bullet}$ and then gives a homotopic invariant of $E^{\bullet}$.

\end{defi}
For complexes of flat $R$-modules, cohomology jump ideals are functorial with respect to base change. This leads to several interesting properties listed in the following proposition.

\begin{prop} \label{p4.1}\cite[Corollary 2.4, Corollary 2.5]{1} Let $E^{\bullet}$ be a chain complex of flat $R$-modules. Then
\begin{enumerate}
\item[(1)]For any noetherian $R$-algebra $S$, $J_{k}^i(E^{\bullet})\cdot S = J_{k}^i(E^{\bullet}\otimes_R S)$.

\item[(2)] In the case that $R$ is a field, $$J_{k}^i(E^{\bullet}) =\begin{cases}
 0 & \text{ if } \dim H^i(E^{\bullet}) \geq k \\ R & \text{ if } \dim H^i(E^{\bullet}) < k.
\end{cases}$$

\item[(3)] For any maximal ideal $\mathfrak{m}$ of $R$, $J_{k}^i(E^{\bullet}) \subset \mathfrak{m}$ if and only if $\dim_{R/ \mathfrak{m}} H^i(E^{\bullet}\otimes_R R/ \mathfrak{m})\geq k$.
\end{enumerate} 

\end{prop}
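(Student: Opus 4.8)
The plan is to reduce all three assertions to one elementary algebraic fact about Fitting ideals together with the insensitivity of flat bounded-above complexes to the difference between ordinary and derived base change. The engine is this: for a homomorphism $\phi\colon P\to Q$ of finitely generated free $R$-modules with representing matrix $M$, and for any $R$-algebra $S$, the matrix of $\phi\otimes_R S$ is the image of $M$ under $R\to S$, so each $r\times r$ minor of $\phi\otimes_R S$ is the image of the corresponding minor of $\phi$. Consequently $I_r(\phi\otimes_R S)=I_r(\phi)\cdot S$, the extension of $I_r(\phi)$ along $R\to S$.

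For part (1), I would first choose, as in Definition \ref{d4.1}, a bounded-above complex $F^{\bullet}$ of finitely generated free $R$-modules with a quasi-isomorphism $F^{\bullet}\xrightarrow{\ \simeq\ }E^{\bullet}$. The decisive point, and the only place where flatness of $E^{\bullet}$ is used, is that $F^{\bullet}\otimes_R S$ is again a bounded-above complex of finitely generated free $S$-modules \emph{quasi-isomorphic to} $E^{\bullet}\otimes_R S$, so it may legitimately compute $J^i_k(E^{\bullet}\otimes_R S)$. To verify this I would observe that the cone $C^{\bullet}$ of $F^{\bullet}\to E^{\bullet}$ is an acyclic bounded-above complex of flat modules; splitting it into the short exact sequences $0\to Z^n\to C^n\to Z^{n+1}\to 0$ and arguing by descending induction (using boundedness above) shows each cocycle $Z^n$ is flat, whence these sequences, and thus $C^{\bullet}$ itself, stay exact after $-\otimes_R S$. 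Since $\mathrm{rank}_S(F^i\otimes_R S)=\mathrm{rank}_R(F^i)$ and the relevant map is $(d_{i-1}\oplus d_i)\otimes_R S$, the engine gives
$$J^i_k(E^{\bullet}\otimes_R S)=I_{\mathrm{rank}(F^i)-k+1}\big((d_{i-1}\oplus d_i)\otimes_R S\big)=I_{\mathrm{rank}(F^i)-k+1}(d_{i-1}\oplus d_i)\cdot S=J^i_k(E^{\bullet})\cdot S,$$
i.e. $J^i_k(E^{\bullet})\otimes_R S=J^i_k(E^{\bullet}\otimes_R S)$ as the image of $J^i_k(E^{\bullet})\otimes_R S$ in $S$.

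For part (2), with $R$ a field every module is free and $E^{\bullet}$ is quasi-isomorphic to its cohomology with zero differentials, so I would take $F^i=H^i(E^{\bullet})$ with all differentials zero; then $\mathrm{rank}(F^i)=\dim H^i(E^{\bullet})$ and $d_{i-1}\oplus d_i$ is the zero map. With the standard conventions $I_r(0)=0$ for $r\ge 1$ and $I_r(0)=R$ for $r\le 0$, one gets $J^i_k(E^{\bullet})=I_{\dim H^i(E^{\bullet})-k+1}(0)$, which vanishes exactly when $\dim H^i(E^{\bullet})-k+1\ge 1$, that is $\dim H^i(E^{\bullet})\ge k$, and equals $R$ exactly when $\dim H^i(E^{\bullet})<k$; this is the claimed dichotomy.

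Part (3) is then a formal consequence of (1) and (2). Applying (1) to the quotient $R\to R/\mathfrak{m}$ gives $J^i_k(E^{\bullet})\cdot(R/\mathfrak{m})=J^i_k(E^{\bullet}\otimes_R R/\mathfrak{m})$, and the left-hand side is the image of $J^i_k(E^{\bullet})$ in the field $R/\mathfrak{m}$, which is $0$ precisely when $J^i_k(E^{\bullet})\subseteq\mathfrak{m}$ and is all of $R/\mathfrak{m}$ otherwise. Since $R/\mathfrak{m}$ is a field, part (2) identifies $J^i_k(E^{\bullet}\otimes_R R/\mathfrak{m})$ with $0$ exactly when $\dim_{R/\mathfrak{m}}H^i(E^{\bullet}\otimes_R R/\mathfrak{m})\ge k$; comparing the two descriptions yields the equivalence. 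I expect the only genuine obstacle to be the flatness argument in part (1) guaranteeing that base change preserves the quasi-isomorphism class of $F^{\bullet}$, for once that is secured the remaining content is bookkeeping with minors and the triviality of proper/improper ideals over a field.
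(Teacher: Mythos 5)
Your proof is correct, but there is nothing in the paper to compare it against: Proposition \ref{p4.1} is stated without proof, as background recalled from Budur--Wang \cite{1}, and is simply invoked later (e.g.\ in the proof of Proposition \ref{p4.3}). What your proposal does is reconstruct the standard argument of \cite{1}, and it does so faithfully. The two substantive ingredients are exactly the right ones: first, that minors are compatible with base change, so $I_r(\phi\otimes_R S)=I_r(\phi)\cdot S$ for any map $\phi$ of finitely generated free modules; second, that flatness of $E^{\bullet}$ guarantees a chosen quasi-isomorphism $F^{\bullet}\to E^{\bullet}$ from a bounded-above complex of finitely generated free modules remains a quasi-isomorphism after $-\otimes_R S$, which you establish via the cone: it is a bounded-above acyclic complex of flat modules, its cocycles are flat by descending induction from the top degree, and hence the short exact sequences into which it splits stay exact after tensoring. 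That is indeed the only place flatness enters, and your identification of it as the crux is accurate; it is also where the noetherian hypothesis on $S$ matters, so that $J_k^i(E^{\bullet}\otimes_R S)$ is even defined. Your treatment of (2) (take $F^{\bullet}=H^{\bullet}(E^{\bullet})$ with zero differentials, then apply the conventions $I_r=R$ for $r\le 0$ and $I_r(0)=0$ for $r\ge 1$) and the purely formal deduction of (3) from (1) applied to $S=R/\mathfrak{m}$ together with (2) over the residue field are also correct. Two tacit points deserve to be made explicit, though neither is a gap: the equality in (1) is an equality of ideals of $S$, i.e.\ both sides denote the extended ideal $J_k^i(E^{\bullet})\cdot S$, as you note; and your freedom to compute with the particular complexes $F^{\bullet}\otimes_R S$ and $H^{\bullet}(E^{\bullet})$ rests on the independence-of-choice assertion built into Definition \ref{d4.1}.
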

\subsection{Cohomology jump functors } Let $\mathfrak{g}_*$ be a differential graded Lie algebra,  we recall the classical deformation functor $\mathbf{\mathrm{MC}}_{\mathfrak{g}_*}$ associated to $\mathfrak{g}_*$, defined via the Maurer-Cartan equation. We have two functors:
\begin{itemize}
\item[(1)] The Gauge functor 
\begin{align*}
G_{\mathfrak{g}_*}: \Art_\mathbb{C} & \rightarrow \mathrm{Grp} \\
A &\mapsto \mathrm{exp}(\mathfrak{g}_0\otimes \mathbf{\mathrm{m}}_A)
\end{align*} where $\mathbf{\mathrm{m}}_A$ is the unique maximal ideal of $A$ and $\mathrm{Grp}$ is the category of groups.

\item[(2)] The Maurer-Cartan functor $MC_{\mathfrak{g}_*}: \Art_k \rightarrow \se $ defined by
\begin{align*}
MC_{\mathfrak{g}_*}: \Art_k & \rightarrow \se \\
A & \mapsto \left \{ x \in \mathfrak{g}_1\otimes \mathrm{m}_A\mid dx+\frac{1}{2}[x,x]=0  \right \}.
\end{align*} 
\end{itemize}
For each $A$, the gauge action of $G_{\mathfrak{g}_*}(A)$ on the set $MC_{\mathfrak{g}_*}(A)$ is functorial in $A$ and gives an action of the group functor  $G_{\mathfrak{g}_*}$ on $MC_{\mathfrak{g}_*}$. This allows us to define the quotient functor \begin{align*}
\mathbf{\mathrm{MC}}_{\mathfrak{g}_*}: \Art_k & \rightarrow \se \\
A & \mapsto MC_{\mathfrak{g}_*}(A)/G_{\mathfrak{g}_*}(A),
\end{align*} 
\begin{defi}\label{d4.2}\cite[Definition 3.8]{1} Let $V_*$ be a connective $\mathfrak{g}_*$-module. For any $A \in \Art_k$ and any $\omega\in \mathbf{\mathrm{MC}}_{\mathfrak{g}_*}(A)$, the Aomoto complex is defined to be the chain complex of $A$-modules $V_*^{A,\omega}:=A\otimes _{k}V_*$ with the differential given by $d_{\omega}:=\mathrm{id}_A \otimes  d +\omega$.
\end{defi} 
\begin{rem}\label{r4.1} In fact, the Aomoto complexes can even be defined for any $A_*\in \dg_k$ in the same manner as in Definition \ref{d4.2} when we extend the deformation functor $\mathbf{\mathrm{MC}}_{\mathfrak{g}_*}$ to a functor defined over $\dg_k$.
\end{rem}

\begin{defi}\label{d4.3} \cite[Definition 3.15]{1} The \emph{cohomology jump functor} associated to $\mathbf{\mathrm{MC}}_{\mathfrak{g}_*}$ and a connective $\mathfrak{g}_*$-module $V_*$ is defined to be 
\begin{align*}
\mathbf{\mathrm{MC}}_{\mathfrak{g}_*,k}^{V_*,i}: \Art_k & \rightarrow \se \\
A & \mapsto \lbrace \omega \in \mathbf{\mathrm{MC}}_{\mathfrak{g}_*}(A)\mid J_{k}^i(A \otimes_k V_*,d_{\omega})=0  \rbrace,
\end{align*} 
 
\end{defi}
Once again, the curious reader is referred to \cite[Section 3]{1} for the well-definedness of these functors and related properties.
\subsection{Derived cohomology jump functors and fmps with cohomological constraints} We propose a derived version of cohomology jump functors attached to a given pair $(V_*, \mathfrak{g}_*)$ defined previously where $\mathfrak{g}_*$ is a differential graded Lie algebra of which $V_*$ is some representation. It turns out that the restriction of the simplicial ones on the category $\Art_k$ of local artinian $k$-algebras with residue field $k$ gives back the classical ones.
\begin{rem} Each time a functor $F:\mathscr{C}\rightarrow \mathscr{D}$  descends to a functor on the associated $\infty$-category of $\mathscr{C}$, by abuse of notations, we still use $F$ to denote its left derived functor.
\end{rem}

On one hand, the classical deformation functor $\mathbf{\mathrm{MC}}_{\mathfrak{g}_*}$ can be naturally extended to a formal moduli problem in Lurie's sense (cf. Definition \ref{d2.3}) via a simplicial version of the Maurer-Cartan equation (see \cite{4} for such a construction). In other words, we have a fmp \begin{equation}\label{e3.1} \mathfrak{MC}_{\mathfrak{g}_*}: \dg_k \rightarrow \ens 
\end{equation} such that \begin{equation} \label{coco}\pi_0(\mathfrak{MC}_{\mathfrak{g}_*})=\mathbf{\mathrm{MC}}_{\mathfrak{g}_*}.
\end{equation} On the other hand, there is an equivalence \begin{equation}\label{e3.2} \mathfrak{MC}_{\mathfrak{g}_*} \rightarrow \Map_{\Lie_{k}}(D(-),\mathfrak{g}_*) 
\end{equation}as formal moduli problems (cf. \cite[$\S2$]{6} and \cite{4}). Consequently, $\Map_{\Lie_{k}}(D(-),\mathfrak{g}_*)$ can be thought of as a natural extension of $\mathbf{\mathrm{MC}}_{\mathfrak{g}_*}$ in the derived world.

\begin{const} Let $X$ be a formal moduli problem and $\mathfrak{g}_*$ be its associated governing differential graded Lie algebra. Given a quasi-coherent sheaf $\mathscr{F} \in \qc(X)^{cn}$ (see Remark \ref{r3.1}), the ``stalk'' at the base point $k$ provides a chain complex of $k$-modules whose $\mathfrak{g}_*$-module structure is given by means of the isomorphism in Proposition \ref{p3.3}. Conversely, let $V_*$ be a $\mathfrak{g}_*$-module. The associated quasi-coherent sheaf $\mathscr{F}^{V_*}$ is constructed as follows. For the sake of the equivalence $$X \simeq \Map_{\Lie_k}(D(-),\mathfrak{g}_*)$$ where $D$ is the Koszul duality introduced in Proposition \ref{p2.2} and Theorem \ref{t2.1}, for any $A_* \in \dg_k$, each $\eta \in X(A_*)$ corresponds to a morphism of differential graded Lie algebras $\bar{\eta}: D(A_*)\rightarrow \mathfrak{g}_*$. Composing this with the structural morphism $\mathfrak{g}_* \rightarrow \mathrm{End}(V_*)$, we obtain a $D(A_*)$-module $V_*$. Finally, an easy application of Theorem \ref{t2.3} gives us an $A_*$-module denoted by $\mathscr{F}^{V_*}_{\eta}$. Suppose further that we have a morphism $f: A_* \rightarrow A'_*$ sending $\eta$ to $\eta' \in X(A'_*)$ then the following diagram is available
\begin{center}
\begin{tikzpicture}[every node/.style={midway}]
  \matrix[column sep={10em,between origins}, row sep={4em}] at (0,0) {
    \node(Y){$D(A_*)$} ; & \node(X) {$\mathfrak{g}_*$}; & \node(K) {$V_*$}; \\
    \node(M) {$D(A'_*)$}; & \node (N) {};\\
  };
  
  \draw[->] (M) -- (Y) node[anchor=east]  {$D(f)$}  ;
    \draw[dashed,->] (Y) to[bend left=20] (K) node[anchor=south] {};
  \draw[->] (X) -- (K) node[anchor=east]  {}  ;
  \draw[->] (Y) -- (X) node[anchor=south]  {$\bar{\eta}$};
  \draw[dashed,->] (M) to[bend right=15] (K) node[anchor=south] {};
  \draw[->] (M) -- (X) node[anchor=south] {$\bar{\eta'}$};.
\end{tikzpicture}
\end{center}
from which we deduce that the $D(A'_*)$-module structure of $V_*$ is induced from the $D(A_*)$-one via the morphism $D(f)$. Therefore, the  equivalence $$\mathscr{F}^{V_*}_{\eta'}\simeq A'_*\otimes_{A_*} \mathscr{F}^{V_*}_{\eta} $$ of $A'_*$-modules follows from the functoriality of $C^*(\mathfrak{g}_*,-)$ in Theorem \ref{t2.2} and Theorem \ref{t2.3}. This proves that the object $\mathscr{F}^{V_*}$ built up in this way is indeed a quasi-coherent sheaf on $X$ (or even a connective quasi-coherent sheaf on $X$ if the initial $\mathfrak{g}_*$-module is connective). We refer to this whole construction of $\mathscr{F}^{V_*}$ the \emph{sheafification} of the $\mathfrak{g}_*$-module $V_*$.
\end{const}
Moreover, we shall prove that this construction is closely related to  the one of Aomoto complexes given in Definition \ref{d4.2} and Remark \ref{r4.1}  by  using the following chain of isomorphisms
\begin{equation}\label{e4.3} X \simeq \Map_{\Lie_k}(D(-),\mathfrak{g}_*) \simeq \mathfrak{MC}_{\mathfrak{g}_*}
\end{equation} of formal moduli problems (cf. \cite[$\S2$]{6} and \cite{4}). This is the content of the proposition below. 
\begin{prop} \label{p4.2} Let $A_* \in \dg_k$, $\omega \in  X(A_*)$ and $V_*$ a connective $\mathfrak{g}_*$-module. Then there exists an equivalence 
$$\mathscr{F}^{V_*}_{\omega} \overset{\simeq}{\rightarrow} V_*^{A_*,\omega}:=(A_*\otimes _{k}V_*,d_{\omega})$$ of $A_*$-modules.

\end{prop}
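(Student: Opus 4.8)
The plan is to compute the stalk $\mathscr{F}^{V_*}_\omega$ through Koszul duality, reduce it to a Chevalley--Eilenberg complex with coefficients, and then match its differential on the nose with the Maurer--Cartan twist $d_\omega$.

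First I would reinterpret the stalk as a pullback. The datum of $\omega \in X(A_*)$ is the same as a morphism $\omega \colon \Spf A_* \to X$ of formal moduli problems, and the stalk functor $\mathscr{F} \mapsto \mathscr{F}_\omega$ is the pullback $\omega^* \colon \qc(X)^{cn} \to \qc(\Spf A_*)^{cn}$ (Proposition \ref{p3.1}(b), Remark \ref{r3.1}). Under the equivalences of Proposition \ref{p3.3}, $\qc(X)^{cn} \simeq \Mod_{\mathfrak{g}_*}^{cn}$ and $\qc(\Spf A_*)^{cn} \simeq \Mod_{D(A_*)}^{cn}$, together with the identification $\omega \leftrightarrow \bar\omega \colon D(A_*) \to \mathfrak{g}_*$ furnished by \eqref{e4.3}, this pullback becomes restriction of representations along $\bar\omega$. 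Post-composing with the inverse of the equivalence of Theorem \ref{t2.3} --- which is the right-derived functor of $C^*(D(A_*),-)$ (the right adjoint of $F$ from Theorem \ref{t2.2}), together with the unit equivalence $A_* \simeq C^*(D(A_*))$ of Proposition \ref{p2.2} --- I obtain
$$\mathscr{F}^{V_*}_\omega \;\simeq\; \mathbb{R}C^*\!\big(D(A_*),\, \bar\omega^* V_*\big)$$
as a module over $C^*(D(A_*)) \simeq A_*$. This is precisely the recipe by which $\mathscr{F}^{V_*}$ was defined in \S\ref{s4}, so the content of the statement is now to render the right-hand side concrete.

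Next I would compute this Chevalley--Eilenberg complex explicitly, using that $A_* \in \dg_k$ is artinian, so that $D(A_*)$ is a cohomologically bounded, pro-nilpotent dgla. In the standard cochain model of Definition \ref{def2.4}, forgetting the action gives underlying graded object $C^*(D(A_*)) \otimes_k V_* \simeq A_* \otimes_k V_*$, which is exactly the underlying graded module of the Aomoto complex $V_*^{A_*,\omega}$. Its differential then splits as the Chevalley--Eilenberg differential with trivial coefficients --- which, under $C^*(D(A_*)) \simeq A_*$, is the tensor-product differential $d_{A_*}\otimes \mathrm{id}_{V_*} + \mathrm{id}_{A_*}\otimes d_{V_*}$ --- plus a term linear in the coefficient action, read off from the $D(A_*)$-module structure of $\bar\omega^* V_*$.

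Finally, and this is the crux, I would identify that linear term with the twist $\omega$ of Definition \ref{d4.2} and Remark \ref{r4.1}. Through the equivalence $X \simeq \mathfrak{MC}_{\mathfrak{g}_*}$ of \eqref{e4.3}, the morphism $\bar\omega$ corresponds to a Maurer--Cartan element $\omega \in \mathfrak{g}_1 \otimes_k \mathrm{m}_{A_*}$, namely the image under $\bar\omega$ of the tautological Maurer--Cartan element of $D(A_*)$ classified by $\mathrm{id}_{D(A_*)}$; evaluating the coefficient-action term of the Chevalley--Eilenberg differential against this element and composing with the structural action $\mathfrak{g}_* \otimes_k V_* \to V_*$ yields exactly the operator $v \mapsto \omega\,v$ on $A_* \otimes_k V_*$. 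Hence the total differential is $d_{A_*}\otimes \mathrm{id}_{V_*} + \mathrm{id}_{A_*}\otimes d_{V_*} + \omega = d_\omega$, and the Maurer--Cartan equation $d\omega + \tfrac{1}{2}[\omega,\omega]=0$ is precisely what forces $d_\omega^2 = 0$, in agreement with $\mathbb{R}C^*(D(A_*),\bar\omega^* V_*)$ being a genuine complex. The main obstacle is this chain-level bookkeeping --- transporting the Chevalley--Eilenberg differential faithfully through the equivalences of Theorem \ref{t2.3} and Proposition \ref{p3.3} and pinning down the tautological Maurer--Cartan element so that its contraction matches $\omega$ acting via the module structure. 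I expect this is best organized by invoking naturality in $A_*$ and the base-change compatibility $\mathscr{F}^{V_*}_{\eta'} \simeq A'_*\otimes_{A_*}\mathscr{F}^{V_*}_{\eta}$ recorded just before the statement, which lets one check the identification on a single generator rather than compute it in full; connectivity of $V_*$ ensures throughout that one stays in the range where Theorem \ref{t2.3} applies.
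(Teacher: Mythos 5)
Your reduction of the statement to computing $\mathbb{R}C^*(D(A_*),\bar\omega^*V_*)$ is correct --- that is indeed exactly how the stalk $\mathscr{F}^{V_*}_\omega$ is defined --- but from there you and the paper travel through the equivalence of Theorem \ref{t2.3} in \emph{opposite directions}, and the paper's choice of direction is what makes its proof short. The paper applies the tensor-direction functor $f(-)=U(\Cn(D(A_*)))\otimes^{\mathbb{L}}_{A_*}(-)$ to the Aomoto complex: since $\Cn(D(A_*))$ is the cone of the identity it is contractible, so $U(\Cn(D(A_*)))\simeq U(0)=k$ and $f(A_*\otimes_k V_*,d_\omega)\simeq k\otimes^{\mathbb{L}}_{A_*}(A_*\otimes_kV_*,d_\omega)\simeq V_*$, the twist dying on reduction because $\omega$ has coefficients in $\mathrm{m}_{A_*}$. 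Both $f$ of the Aomoto complex and $f(\mathscr{F}^{V_*}_\omega)$ are then $V_*$ equipped with the $D(A_*)$-action induced by $\bar\omega$ via (\ref{e4.3}), and since $f$ is an equivalence (hence reflects equivalences) the proposition follows. No Chevalley--Eilenberg cochain complex is ever computed.

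In your direction there is a genuine gap at the crux. The complex $C^*(\mathfrak{g}_*,V_*)=\Hom_{U(\mathfrak{g}_*)}(U(\Cn(\mathfrak{g})_*),V_*)$ is, as a graded object, $\Hom_k(\mathrm{Sym}(\mathfrak{g}_*[1]),V_*)$, a direct product over symmetric powers; because $D(A_*)$ is infinite-dimensional this is a completion that properly contains $C^*(D(A_*))\otimes_k V_*$ when $V_*$ is merely connective, so your assertion that ``forgetting the action gives underlying graded object $C^*(D(A_*))\otimes_k V_*$'' fails, and a fortiori the identification with $A_*\otimes_kV_*$, since $A_*\simeq C^*(D(A_*))$ is only the unit quasi-isomorphism of Proposition \ref{p2.2}, not an isomorphism. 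For the same reason nothing can be matched ``on the nose'': a point $\omega\in X(A_*)$ determines $\bar\omega$ only as a point of a mapping space, and the pushforward of the tautological Maurer--Cartan element agrees with the twist $\omega$ only up to gauge/homotopy. What your route actually requires is (i) a twisted comparison chain map from $(A_*\otimes_kV_*,d_\omega)$ into the completed Chevalley--Eilenberg complex, and (ii) a convergence argument --- filtering by powers of $\mathrm{m}_{A_*}$, which is nilpotent since $A_*$ is artinian, against the complete augmentation filtration of $C^*(D(A_*))$, using connectivity of $V_*$ --- showing that this map is a quasi-isomorphism. Your proposed remedy, naturality in $A_*$ and the base-change formula $\mathscr{F}^{V_*}_{\eta'}\simeq A'_*\otimes_{A_*}\mathscr{F}^{V_*}_{\eta}$, compares stalks at different points but produces neither (i) nor (ii). The strategy can be completed, but filling this step is precisely the chain-level work that the paper's tensor-direction argument was designed to avoid.
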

\begin{proof} The image of $(A_*\otimes _{k}V_*,d_{\omega})$ under the functor $f$ in Theorem \ref{t2.3} is a connective $D(A_*)$-module in $\Mod_{D(A_*)}^{cn}$.  Now, unwinding the definition of $f$, we have $$f(A_* \otimes_k V_*,d_{\omega})=U(\Cn(D(A_*))) \otimes_{A_*}^\mathbb{L} (A_* \otimes_k V_*,d_{\omega}).$$
Here, we used the fact that for any $A_*\in \dg_k$, the unit morphism $$C^*D(A_*) \rightarrow A_*$$ is an equivalence in $\dg_k$ (cf. Proposition \ref{p2.2}). The cone $\Cn(D(A_*))$ of $D(A_*)$ is a contractible chain complex since its underlying chain complex can be identified with the mapping cone of the identity $D(A_*)\rightarrow D(A_*)$. In particular, $0\rightarrow \Cn(D(A_*)) $ is a quasi-isomorphism of dgLas. Because the universal enveloping algebra construction preserves quasi-isomorphisms, $U(0)=k\rightarrow U(\Cn(D(A_*))$ is also a weak equivalence. Thus, \begin{align*}
f(A_* \otimes_k V_*,d_{\omega}) &\simeq U(\Cn(D(A_*))) \otimes_{A_*}^{\mathbb{L}} (A_* \otimes_k V_*,d_{\omega}) \\
 &\simeq k \otimes_{A_*}^{\mathbb{L}} (A_* \otimes_k V_*,d_{\omega})\\
 &\simeq V_* 
\end{align*}
On one hand, by the isomorphisms (\ref{e4.3}) the element $\omega \in \mathfrak{MC}_{\mathfrak{g}_*}(A_*) $ corresponds to a morphism $\bar{\omega}: D(A_*)\rightarrow \mathfrak{g}_*$ which defines the $D(A_*)$-module structure of $f(V_*\otimes _{k}A_*,d_{\omega})$. On the other hand, by abuse of notations, we can suppose that $\omega \in X(A_*)$ which corresponds to the same morphism $\bar{\omega}: D(A_*)\rightarrow \mathfrak{g}_*$ up to equivalence once again by the isomorphisms (\ref{e4.3}). Therefore, the $D(A_*)$-module structure of $f(A_* \otimes_k V_*,d_{\omega})$ and that of $f(\mathscr{F}^{V_*}_{\omega})$ are the same up to equivalence. Moreover, they have the same underlying chain complex up to quasi-isomorphism of chain complexes. Consequently, they are equivalent in $\Mod_{D(A_*)}^{cn}$.
\end{proof}

Next, we introduce the notion of \emph{coherent sheaves} and that of \emph{flat sheaves} on formal moduli problems. Let $A_*$ be in $\dg_k$, then $A_*$ is cohomologically concentrated in non-positive degrees. By a simple canonical truncation argument, we can even suppose that $A_*$ is really concentrated in non-positive degrees i.e.
$$A_*= (\cdots \overset{d}{\rightarrow}A_{-n}\overset{d}{\rightarrow} A_{-n+1}\overset{d}{\rightarrow} \cdots \overset{d}{\rightarrow}A_{-1}\overset{d}{\rightarrow} A_0\overset{d}{\rightarrow}0\overset{d}{\rightarrow} \cdots ).$$ Moreover, the space $d(A_{-1})$ of boundaries generates an ideal in $A_0$ so that the canonical projection $$A_0 \rightarrow H^0(A_*)=A_0/d(A_{-1})$$ becomes a morphism of $k$-algebras. If we think of $H^{0}(A_*)$ as a differential graded local artinian $k$-algebra concentrated in degree $0$ then obtain a morphism \begin{center}
\begin{tikzpicture}[every node/.style={midway}]
  \matrix[column sep={5.5em,between origins}, row sep={4em}] at (0,0) {
    \node(A1){$A_*$} ; & \node(A2) {$\cdots$}; & \node(A3){$A_{-n}$} ; & \node(A4) {$A_{-n+1}$}; & \node(A5){$\cdots$} ; & \node(A6) {$A_{-1}$};& \node(A7){$A_0$} ; & \node(A8) {$\cdots$}; \\
    \node(B1){$H^0(A_*)$} ; & \node(B2) {$\cdots$}; & \node(B3){$0$} ; & \node(B4) {$0$}; & \node(B5){$\cdots$} ; & \node(B6) {$0$};& \node(B7){$H^0(A_*)$} ; & \node(B8) {$\cdots$};\\
  };
  
  \draw[->] (A2) -- (A3) node[anchor=south]  {$d$}  ;
  \draw[->] (A3) -- (A4) node[anchor=south]  {$d$};
  \draw[->] (A4) -- (A5) node[anchor=south] {};
  \draw[->] (A5) -- (A6) node[anchor=south] {$d$};
   \draw[->] (A6) -- (A7) node[anchor=south] {$d$}; \draw[->] (A7) -- (A8) node[anchor=south] {$d$};
   
   \draw[->] (B2) -- (B3) node[anchor=south]  {};
  \draw[->] (B3) -- (B4) node[anchor=south]  {};
  \draw[->] (B4) -- (B5) node[anchor=south] {};
  \draw[->] (B5) -- (B6) node[anchor=south] {};
   \draw[->] (B6) -- (B7) node[anchor=south] {}; 
   \draw[->] (B7) -- (B8) node[anchor=south] {};
   \draw[->>] (A1) -- (B1) node[anchor=south]  {};
   \draw[->>] (A3) -- (B3) node[anchor=south] {};
   \draw[->>] (A4) -- (B4) node[anchor=south]  {};
   \draw[->>] (A6) -- (B6) node[anchor=south]
{};
   \draw[->>] (A7) -- (B7) node[anchor=south] {};.
\end{tikzpicture}
\end{center} in $\dg_k$. This gives rise to a functor $\mathscr{H}_{A_*}: \Mo_{A_*} \rightarrow \Mo_{H^0(A_*)}$ which to each $A_*$-module $V_*$, associates the $H^{0}(A_*)$-module $H^0(A_*)\otimes_{A_*} V_* $. Moreover, $\mathscr{H}_{A_*}$ can be shown to send equivalences of $A_*$-modules to those of $H^0(A_*)$-modules. Therefore, it descends to a functor $\Mod_{A_*} \rightarrow \Mod_{H^0(A_*)}$ of $\infty$-categories, which by abuse of notation we still denote by $\mathscr{H}_{A_*}$. Furthermore, for any morphism $A \rightarrow A'$ in $\dg_k$, the following commutative diagram 
\begin{center}
\begin{tikzpicture}[every node/.style={midway}]
  \matrix[column sep={12em,between origins}, row sep={4em}] at (0,0) {
    \node(Y){$A_*$} ; & \node(X) {$A_*'$}; \\
    \node(M) {$H^0(A_*)$}; & \node (N) {$H^0(A_*')$};\\
  };
  
  \draw[<<-] (M) -- (Y) node[anchor=east] {};
  \draw[->] (Y) -- (X) node[anchor=south] {};
  \draw[<<-] (N) -- (X) node[anchor=west] {};
  \draw[->] (M) -- (N) node[anchor=south] {};.
 
\end{tikzpicture} 
\end{center}induces a diagram 
\begin{center}
\begin{tikzpicture}[every node/.style={midway}]
  \matrix[column sep={16em,between origins}, row sep={4em}] at (0,0) {
    \node(Y){$\Mo_{A_*}$} ; & \node(X) {$\Mo_{A_*'}$}; \\
    \node(M) {$\Mo_{H^0(A_*)}$}; & \node (N) {$\Mo_{H^0(A_*')}$};\\
  };
  
  \draw[<<-] (M) -- (Y) node[anchor=east] {$\mathscr{H}_{A_*}$};
  \draw[->] (Y) -- (X) node[anchor=south] {$A_*'\otimes_{A_*}-$ };
  \draw[<<-] (N) -- (X) node[anchor=west] {$\mathscr{H}_{A_*'}$};
  \draw[->] (M) -- (N) node[anchor=south] {$H^0(A_*')\otimes_{H^0(A_*)}-$};.
 
\end{tikzpicture}
\end{center}
in which all the functors appearing preserve also equivalences. Hence, it descends to a diagram 
\begin{center}

\begin{tikzpicture}[every node/.style={midway}]
  \matrix[column sep={16em,between origins}, row sep={4em}] at (0,0) {
    \node(Y){$\Mod_{A_*}$} ; & \node(X) {$\Mod_{A_*'}$}; \\
    \node(M) {$\Mod_{H^0(A_*)}$}; & \node (N) {$\Mod_{H^0(A_*')}$};\\
  };
  
  \draw[<<-] (M) -- (Y) node[anchor=east] {$\mathscr{H}_{A_*}$};
  \draw[->] (Y) -- (X) node[anchor=south] {$A_*'\otimes_{A_*}-$ };
  \draw[<<-] (N) -- (X) node[anchor=west] {$\mathscr{H}_{A_*'}$};
  \draw[->] (M) -- (N) node[anchor=south] {$H^0(A_*')\otimes_{H^0(A_*)}-$};.
 \end{tikzpicture}
\end{center}of $\infty$-categories as well.

\begin{rem} Objects in $\Mod_{H^0(A_*)}$ are nothing but chain complexes of $H^0(A_*)$-modules in the usual sense. This follows from the fact that $H^0(A_*)$ is concentrated in degree $0$.
\end{rem}
\begin{defi} \label{d4.4} Let $X$ be a formal moduli problem on which a quasi-coherent sheaf  $\mathscr{F}$ is defined. Then
\begin{enumerate}
\item[(1)] $\mathscr{F}$ is said to be \emph{coherent} if for any $A_* \in \dg_k$ and any $\eta \in X(A_*)$, the image of the stalk $\mathscr{F}_{\eta}$ under the functor $\mathscr{H}_{A}$ in $\Mod_{H^0(A_*)}$, i.e.  $\mathscr{H}_{A}(\mathscr{F}_{\eta})$ has finitely generated cohomologies over $H^0(A_*)$.
\item[(2)]  $\mathscr{F}$ is said to be \emph{flat }if for any $A_* \in \dg_k$ and any $\eta \in X(A_*)$, $\mathscr{H}_{A}(\mathscr{F}_{\eta})$ is a chain complex of flat $H^0(A_*)$-modules.

\item[(3)]  $\mathscr{F}$ is said to be \emph{bounded above} if for any $A_* \in \dg_k$ and any $\eta \in X(A_*)$, $\mathscr{H}_{A}(\mathscr{F}_{\eta})$ is a bounded above chain complex of $H^0(A_*)$-modules.

\item[(4)]  $\mathscr{F}$ is said to be \emph{good} if it is bounded above and flat.
\end{enumerate}

\end{defi}
\begin{rem}\label{r4.2} If $V_*$ is a connective $\mathfrak{g}_*$-module, we say that $V_*$ is \emph{flat} if for any $A_*\in \dg_k$ and for any element $\eta\in \Map_{\Lie_k}(D(A_*),\mathfrak{g}_*)$, the $A_*$-module $C^*(D(A_*),V_*)$ (cf. Definition \ref{def2.4}), where $V_*$ is thought of as a $D(A_*)$-module via the module structure induced by $\eta$, is flat in the sense of Definition \ref{d4.4}. We say that $V_*$ is \emph{good} if it is cohomologically finite-dimensional, bounded above and flat (see also Definition \ref{d2.5}).
\end{rem}

At the present, we are in a position to give the notion of fmps with cohomological constraints and the notion of derived cohomology jump functors which in fact generalizes that of cohomology jump functors given in Definition \ref{d4.3}.
\begin{prop} \label{p4.3} Let $X$ be a formal moduli problem on which a good coherent sheaf $\mathscr{F}$ is defined. To each $A_*\in \dg_k$, the assignment  $$A_* \mapsto \lbrace \eta \in X(A_*)\mid    J_k^i(\mathscr{H}_{A_*}(\mathscr{F}_{\eta}))=0 \rbrace, $$ where $J_k^i$ is given in Definition \ref{d4.1},  gives rise to a well-defined functor $X_{\mathscr{F}}^{i,k} : \dg_k \rightarrow \ens$ .
\end{prop}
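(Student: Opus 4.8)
The plan is to verify the two things that ``well-defined functor'' requires: first, that for each fixed $A_* \in \dg_k$ the prescribed subset is an honest sub-object of the space $X(A_*)$ independent of all auxiliary choices; and second, that this assignment is compatible with morphisms in $\dg_k$, i.e. functorial. The whole argument will rest on the base-change behaviour of cohomology jump ideals recorded in Proposition \ref{p4.1}(1), together with the quasi-coherence property of $\mathscr{F}$ from Remark \ref{r3.1}.

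For well-definedness, I would first note that since $\mathscr{F}$ is jumping and coherent (Definition \ref{d4.4}), for every $\eta \in X(A_*)$ the complex $\mathscr{H}_{A_*}(\mathscr{F}_{\eta})$ is a bounded-above complex of flat $H^0(A_*)$-modules with finitely generated cohomology; hence Definition \ref{d4.1} applies and $J_k^i(\mathscr{H}_{A_*}(\mathscr{F}_{\eta}))$ is defined and is a homotopy invariant of the stalk. Because $\mathscr{F}$ is a quasi-coherent sheaf, any two points lying in the same connected component of $X(A_*)$ have equivalent stalks, so $J_k^i(\mathscr{H}_{A_*}(\mathscr{F}_{\eta}))$ depends only on the path component of $\eta$. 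Consequently the prescribed subset is a union of connected components, i.e. a sub-simplicial set of $X(A_*)$, and this is how I would define $X_{\mathscr{F}}^{ik}(A_*)$.

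For functoriality, given a morphism $f: A_* \rightarrow A'_*$ with induced map $X(f): X(A_*) \rightarrow X(A'_*)$ sending $\eta \mapsto \eta'$, I would first establish the base-change identification
\[
\mathscr{H}_{A'_*}(\mathscr{F}_{\eta'}) \simeq H^0(A'_*) \otimes_{H^0(A_*)} \mathscr{H}_{A_*}(\mathscr{F}_{\eta}).
\]
This combines the quasi-coherence equivalence $\mathscr{F}_{\eta'} \simeq A'_* \otimes_{A_*} \mathscr{F}_{\eta}$ (Remark \ref{r3.1}) with the commuting square relating $\mathscr{H}_{A_*}$, $\mathscr{H}_{A'_*}$ and the two extension-of-scalars functors constructed just before Definition \ref{d4.4}; since flatness of $\mathscr{F}$ makes $\mathscr{H}_{A_*}(\mathscr{F}_{\eta})$ a complex of flat modules, the tensor product on the right may be taken underived. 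Then I would apply Proposition \ref{p4.1}(1) with $R = H^0(A_*)$, $S = H^0(A'_*)$ (both artinian, hence noetherian) and $E^{\bullet} = \mathscr{H}_{A_*}(\mathscr{F}_{\eta})$ to obtain
\[
J_k^i(\mathscr{H}_{A'_*}(\mathscr{F}_{\eta'})) = J_k^i(\mathscr{H}_{A_*}(\mathscr{F}_{\eta})) \otimes_{H^0(A_*)} H^0(A'_*).
\]
In particular, if the jump ideal of $\eta$ vanishes then so does that of $\eta'$, so $X(f)$ carries $X_{\mathscr{F}}^{ik}(A_*)$ into $X_{\mathscr{F}}^{ik}(A'_*)$; the functoriality of $X$ then upgrades to functoriality of $X_{\mathscr{F}}^{ik}$.

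The main obstacle I anticipate is the base-change identification in the third paragraph: making precise the compatibility of the derived extension of scalars $A'_* \otimes_{A_*} -$ with the $\pi_0$-reduction functor $\mathscr{H}$, and checking that the resulting tensor product is exactly the underived one to which Proposition \ref{p4.1}(1) literally applies. Everything else is a formal consequence of the homotopy-invariance of $J_k^i$ together with the flatness and coherence hypotheses that are built into the notion of a jumping coherent sheaf.
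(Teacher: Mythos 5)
Your proposal is correct and follows essentially the same route as the paper's own proof: well-definedness of the ideals $J_k^i(\mathscr{H}_{A_*}(\mathscr{F}_{\eta}))$ from the jumping-coherent hypothesis, then functoriality via the quasi-coherence base change $\mathscr{F}_{\eta'} \simeq A'_* \otimes_{A_*} \mathscr{F}_{\eta}$, the commuting square relating $\mathscr{H}_{A_*}$ and $\mathscr{H}_{A'_*}$, and Proposition \ref{p4.1}(1) applied over $H^0(A_*)_*$. Your extra observation that the jump ideal is constant on path components (so the subset is a genuine sub-simplicial set) is a detail the paper leaves implicit, but it does not change the argument.
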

\begin{proof}
First of all, observe that for each $A_* \in \dg_k$ and each $\eta \in X(A_*)$, $\mathscr{H}_{A_*}(\mathscr{F}_{\eta})$ is a bounded-above complex of $H^0(A_*)$-modules whose cohomologies are finitely generated over $H^0(A_*)$ by the assumption that $\mathscr{F}$ is a connective good coherent sheaf. Therefore, the cohomology jump ideals  $J_k^i(\mathscr{H}_{A_*}(\mathscr{F}_{\eta}))$ are well-defined.

Given a morphism $A_*\overset{f}{\rightarrow} A_*'$ in $\dg_k$, we have a morphism of  $X(A_*)\overset{\bar{f}}{\rightarrow} X(A_*')$ of simplicial sets. Suppose that $\eta \in X^{i,k}_{\mathscr{F}}(A_*)\subset X(A_*)$ and that $\bar{f}$ sends $\eta$ to some $\eta' \in X(A_*')$. We claim that $\eta'$ is actually an element of $X^{i,k}_{\mathscr{F}}(A_*')$. Indeed,  the equivalence $$\mathscr{F}_{\eta'}\simeq A'\otimes_A \mathscr{F}_{\eta} $$ implies the following chain of equivalences
\begin{align*}
\mathscr{H}_{A_*'}(\mathscr{F}_{\eta'}) &\simeq \mathscr{H}_{A_*'}( A'\otimes_A \mathscr{F}_{\eta}  )\\
 &\simeq H^0(A_*')\otimes_{H^0(A_*)}\mathscr{H}_{A_*}(\mathscr{F}_{\eta}) 
\end{align*}
Since $\mathscr{H}_{A_*}(\mathscr{F}_{\eta})$ is a complex of flat $H^0(A_*)$-modules, by Proposition \ref{p4.1} and by the fact that cohomology jump ideals are invariant under equivalences of chain complexes, we obtain that 
\begin{align*}
J_k^i(\mathscr{H}_{A_*'}(\mathscr{F}_{\eta'}) ) &=  J_k^i( H^0(A_*')\otimes_{H^0(A_*)}\mathscr{H}_{A_*}(\mathscr{F}_{\eta}) ) \\
 &=  J_k^i( \mathscr{H}_{A_*}(\mathscr{F}_{\eta}) )  \cdot H^0(A_*').
\end{align*}
As $\eta \in X^{i,k}_{\mathscr{F}}(A_*)$, by definition, $J_k^i( \mathscr{H}_{A_*}(\mathscr{F}_{\eta}) )=0 $ and then so is $J_k^i(\mathscr{H}_{A_*'}(\mathscr{F}_{\eta'}) )$. This justifies the claim. In particular, if $A_*\overset{f}{\rightarrow} A_*'$ is an equivalence in $\dg_k$ then it induces an equivalence $X^{i,k}_{\mathscr{F}}(A_*) \rightarrow X^{i,k}_{\mathscr{F}}(A_*')$.
This finishes the well-definedness verification of $X_{\mathscr{F}}^{i,k}$.
\end{proof}

\begin{prop}\label{p4.4} Let $\mathfrak{g}_*$ be a dgLa and $V_*$ a $\mathfrak{g}_*$-module. To each $A_*\in \dg_k$, the assignment  $$A_* \mapsto \lbrace \eta \in \mathfrak{MC}_{\mathfrak{g}_*}(A_*)\mid    J_k^i(\mathscr{H}_{A_*}(V_*^{A_*,\eta}))=0 \rbrace, $$ where $J_k^i$ is given in Definition \ref{d4.1},  gives rise to a well-defined functor $\mathfrak{MC}_{\mathfrak{g}_*,V_*}^{i,k} : \dg_k \rightarrow \ens$ .

\end{prop}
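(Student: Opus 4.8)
The plan is to reduce this statement to Proposition \ref{p4.3} by recognizing $\mathfrak{MC}_{\mathfrak{g}_*}$ as a formal moduli problem and the Aomoto complex as the stalk of a sheafification. First I would invoke the chain of equivalences (\ref{e4.3}) to identify $\mathfrak{MC}_{\mathfrak{g}_*}$ with the formal moduli problem $X := \Map_{\Lie_k}(D(-),\mathfrak{g}_*)$, whose associated governing differential graded Lie algebra is exactly $\mathfrak{g}_*$. Under the standing hypotheses on $V_*$ (i.e.\ $V_*$ connective and jumping in the sense of Remark \ref{r4.2}), the sheafification construction preceding Proposition \ref{p4.2} produces a connective quasi-coherent sheaf $\mathscr{F}^{V_*} \in \qc(X)$, and the translation of the flatness, bounded-above, and cohomological finiteness conditions of Remark \ref{r4.2} into the sheaf-theoretic language of Definition \ref{d4.4} exhibits $\mathscr{F}^{V_*}$ as a jumping coherent sheaf on $X$.

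Next I would apply Proposition \ref{p4.2}: for every $A_* \in \dg_k$ and every $\eta \in X(A_*) = \mathfrak{MC}_{\mathfrak{g}_*}(A_*)$ it furnishes an equivalence of $A_*$-modules $\mathscr{F}^{V_*}_\eta \overset{\simeq}{\rightarrow} V_*^{A_*,\eta}$. Applying the functor $\mathscr{H}_{A_*}$ and using that cohomology jump ideals are invariant under quasi-isomorphism (Definition \ref{d4.1}), this gives $J_k^i(\mathscr{H}_{A_*}(V_*^{A_*,\eta})) = J_k^i(\mathscr{H}_{A_*}(\mathscr{F}^{V_*}_\eta))$ for all $i,k$. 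Consequently the assignment in the statement coincides pointwise with the subfunctor $X^{ik}_{\mathscr{F}^{V_*}}$ of Proposition \ref{p4.3}, so that $\mathfrak{MC}^{ik}_{\mathfrak{g}_*,V_*} = X^{ik}_{\mathscr{F}^{V_*}}$ is well-defined and functorial, and in particular carries equivalences in $\dg_k$ to equivalences of simplicial sets.

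The only real content — and the step I expect to be the main obstacle — is verifying that the jumping hypothesis on the $\mathfrak{g}_*$-module $V_*$ (formulated in Remark \ref{r4.2} through the Chevalley--Eilenberg module $C^*(D(A_*),V_*)$) corresponds exactly to $\mathscr{F}^{V_*}$ being a \emph{jumping coherent} sheaf in the sense of Definition \ref{d4.4}; this hinges on the compatibility of the equivalence of Proposition \ref{p4.2} with the base-change functor $\mathscr{H}_{A_*}$ and with $H^0(-)$. Once this identification is secured, Proposition \ref{p4.3} applies verbatim and closes the argument. Alternatively, one can argue directly, mirroring the proof of Proposition \ref{p4.3}: for a morphism $f : A_* \to A'_*$ in $\dg_k$ sending $\eta$ to $\eta'$, the base-change equivalence $V_*^{A'_*,\eta'} \simeq A'_* \otimes_{A_*} V_*^{A_*,\eta}$ yields $\mathscr{H}_{A'_*}(V_*^{A'_*,\eta'}) \simeq H^0(A'_*)_* \otimes_{H^0(A_*)_*} \mathscr{H}_{A_*}(V_*^{A_*,\eta})$, after which flatness together with Proposition \ref{p4.1}(1) guarantees that $J_k^i$ is preserved, forcing $\eta' \in \mathfrak{MC}^{ik}_{\mathfrak{g}_*,V_*}(A'_*)$ whenever $\eta \in \mathfrak{MC}^{ik}_{\mathfrak{g}_*,V_*}(A_*)$.
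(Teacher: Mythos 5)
Your proposal is correct, and it actually contains the paper's intended argument as its closing ``alternative'' route: the paper's own proof of Proposition \ref{p4.4} consists of the single sentence that it is ``somewhat similar to the one of Proposition \ref{p4.3}'' and is left to the reader, i.e.\ exactly the direct mirroring you sketch (base change of Aomoto complexes along $f\colon A_*\to A'_*$, compatibility of $\mathscr{H}_{(-)}$ with base change, flatness, and Proposition \ref{p4.1}(1)). Your primary route --- identifying $\mathfrak{MC}_{\mathfrak{g}_*}$ with $X=\Map_{\Lie_k}(D(-),\mathfrak{g}_*)$ via (\ref{e4.3}), sheafifying $V_*$ to $\mathscr{F}^{V_*}$, and using Proposition \ref{p4.2} to get $J_k^i(\mathscr{H}_{A_*}(V_*^{A_*,\eta}))=J_k^i(\mathscr{H}_{A_*}(\mathscr{F}^{V_*}_{\eta}))$, so that $\mathfrak{MC}^{ik}_{\mathfrak{g}_*,V_*}=X^{ik}_{\mathscr{F}^{V_*}}$ --- is a genuinely different packaging: it is what the paper records separately as Remark \ref{r4.5} and re-verifies inside the proof of Theorem \ref{t5.1} (where $\mathscr{F}^{V_*}\in\qqc(X)^{\mathcal{J}}$ is checked), and it is not circular, since it only invokes Propositions \ref{p4.2} and \ref{p4.3}. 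What the reduction buys is that functoriality is verified only once, in Proposition \ref{p4.3}; its cost is precisely the step you flag as the main obstacle, namely translating the jumping conditions of Remark \ref{r4.2} on $V_*$ into $\mathscr{F}^{V_*}$ being jumping coherent in the sense of Definition \ref{d4.4}. The direct argument avoids that issue entirely, because after applying $\mathscr{H}_{A_*}$ the Aomoto complex becomes $(H^0(A_*)\otimes_k V_*,d_{\eta'})$, a complex of free $H^0(A_*)$-modules, so the flatness needed for Proposition \ref{p4.1}(1) is automatic. One caveat you handle correctly but that deserves emphasis: as stated, the proposition imposes no hypotheses on $V_*$, yet boundedness above and finite-dimensionality of cohomology must be assumed (as in Remark \ref{r4.2}) for the ideals $J_k^i$ of Definition \ref{d4.1} to be defined at all, so your ``standing hypotheses'' are genuinely needed rather than optional.
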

\begin{proof}
Taking in account  the well-definedness of the functors given in Definition \ref{d4.1} and the isomorphism (\ref{coco}), the proof is similar to the one of Proposition \ref{p4.3} and then is left to the reader.
\end{proof}

\begin{defi} The functors $X_{\mathscr{F}}^{i,k}$ in Proposition \ref{p4.3} are called \emph{formal moduli problems with cohomological constraints associated} the pair $(X,\mathscr{F})$ while the functors $\mathfrak{MC}_{\mathfrak{g}_*,V_*}^{i,k}$ in Proposition \ref{d4.4} are called \emph{derived cohomology jump functor} associated to the pair $(\mathfrak{g}_*,V_*)$.
\end{defi}
\begin{rem} \label{r4.5} Let $X$ be a formal moduli problem. Then we can assume that $X$ is of the form $\Map_{\Lie_k}(D(-),\mathfrak{g}_*)$ where  $\mathfrak{g}_*$ is some differential graded Lie algebra and $D$ is the Koszul functor (cf. Proposition \ref{p2.2} and Theorem \ref{t2.1}). Suppose further that $V_*$ is a connective bounded-above $\mathfrak{g}_*$-module then the associated quasi-coherent $\mathscr{F}^{V_*}$ is a bounded-above good coherent sheaf on $X$. Thus, it can be easily checked that $X_{\mathscr{F}^{V_*}}^{i,k}$ and $\mathfrak{MC}_{\mathfrak{g}_*,V_*}^{i,k}$ are essentially equivalent to each other by Proposition \ref{d4.2}. In addition, the restriction of their connected components on $\Art_k$ are nothing but the classical cohomology jump functors in Definition \ref{d4.3}.\end{rem}

\section{Lurie-Pridham's equivalence with cohomological constraints} \label{s5} From now on, by $(X,\mathfrak{g}_*)$, we always mean a formal moduli problem and its associated differential graded Lie algebra. Let $\qqc(X)^{\mathcal{G}}$ and $\Mo_{\mathfrak{g}_*}^{\mathcal{G}}$ denote the full-subcategory of $\qc(X)^{cn}$ spanned by the good coherent sheaves (cf. Definition \ref{d4.4}) and the full-subcategory of $\Mod_{\mathfrak{g}_*}^{cn}$ generated by the good connective  $\mathfrak{g}_*$-modules, i.e. connective $\mathfrak{g}_*$-modules which are also cohomologically finite-dimensional, bounded above and flat (cf. Definition \ref{d2.5} and Remark \ref{r4.2}), respectively.

\begin{thm}\label{t5.1} Let $X$ be a formal moduli problem whose governing differential graded Lie algebra is $\mathfrak{g}_*$. Then there exists  an equivalence 
$$\qqc(X)^{\mathcal{G}} \overset{\simeq}{\rightarrow} \Mo_{\mathfrak{g}_*}^{\mathcal{G}}.$$ 
\end{thm}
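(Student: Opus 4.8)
The plan is to obtain the desired equivalence by restricting the unconstrained equivalence of Proposition \ref{p3.3}. Write $\Phi \colon \qc(X)^{cn} \overset{\simeq}{\rightarrow} \Mod_{\mathfrak{g}_*}^{cn}$ for the canonical equivalence supplied there, whose essential inverse is the sheafification functor $V_* \mapsto \mathscr{F}^{V_*}$ described just before Proposition \ref{p4.2}. Since $\qqc(X)^{\mathcal{J}}$ and $\Mo_{\mathfrak{g}_*}^{\mathcal{J}}$ are, by definition, \emph{full} subcategories of the two sides, it suffices to prove that $\Phi$ and its inverse carry one class of objects onto the other; that is, that a connective sheaf $\mathscr{F}$ is jumping coherent if and only if the $\mathfrak{g}_*$-module $\Phi(\mathscr{F})$ is jumping. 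Once the objects are matched, the restriction of a fully faithful, essentially surjective functor to full subcategories spanned by corresponding objects is again an equivalence, and the theorem follows.

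First I would reduce everything to a single stalkwise identification. For $A_* \in \dg_k$ and $\eta \in X(A_*)$, Proposition \ref{p4.2} gives an equivalence of $A_*$-modules $\mathscr{F}^{V_*}_{\eta} \simeq (A_* \otimes_k V_*, d_{\eta})$, where $V_* = \Phi(\mathscr{F})$; applying $\mathscr{H}_{A_*}$ then yields
$$\mathscr{H}_{A_*}(\mathscr{F}^{V_*}_{\eta}) \simeq H^0(A_*) \otimes_{A_*} (A_* \otimes_k V_*, d_{\eta}).$$
This is precisely the complex that figures both in Definition \ref{d4.4} (flatness, boundedness and coherence of $\mathscr{F}$) and, via the identification of the Chevalley--Eilenberg complex $C^*(D(A_*)_*, V_*)$ with the Aomoto complex, in Remark \ref{r4.2} (flatness and boundedness of $V_*$). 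Consequently the flatness and bounded-above conditions correspond to each other immediately: by construction they are imposed on the same object $\mathscr{H}_{A_*}(\mathscr{F}^{V_*}_{\eta})$ for every pair $(A_*, \eta)$.

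The substantial point is to match the coherence of $\mathscr{F}^{V_*}$ with the cohomological finite-dimensionality of $V_*$ (cf. Definition \ref{d2.5}). One implication is painless: evaluating at $A_* = k$ and the base point gives $\mathscr{H}_{k}(\mathscr{F}^{V_*}_{\mathrm{pt}}) \simeq V_*$, so coherence forces $H^*(V_*)$ to be finite-dimensional over $k$. For the converse I would argue that, since $A_*$ is artinian, $H^0(A_*)$ is a finite-dimensional local $k$-algebra with nilpotent maximal ideal $\mathfrak{m}$. Filtering $\bigl(H^0(A_*) \otimes_k V_*, d_{\eta}\bigr)$ by the $\mathfrak{m}$-adic filtration, the Maurer--Cartan perturbation term, living in $\mathfrak{m}$, strictly raises the filtration degree, so the associated graded differential is $\mathrm{id} \otimes d$ and the first page of the resulting spectral sequence is $\mathrm{gr}\,H^0(A_*) \otimes_k H^*(V_*)$, which is finite-dimensional over $k$ whenever $V_*$ is. Because $\mathfrak{m}$ is nilpotent the filtration is finite and the spectral sequence converges, so $\mathscr{H}_{A_*}(\mathscr{F}^{V_*}_{\eta})$ has finite-dimensional cohomology over $k$, hence finitely generated cohomology over the artinian ring $H^0(A_*)_*$, which is exactly coherence.

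I expect this last step --- the transfer of finiteness from the fibre at the base point to coherence over every artinian base --- to be the main obstacle; the flatness and boundedness matchings are essentially encoded in the definitions through Proposition \ref{p4.2}, whereas here one genuinely needs the nilpotence of $\mathfrak{m}$ and the convergence of the filtration spectral sequence. Assembling the three matchings shows that $\Phi$ and its inverse interchange jumping coherent sheaves with jumping connective $\mathfrak{g}_*$-modules, and restricting $\Phi$ to these full subcategories produces the equivalence $\qqc(X)^{\mathcal{J}} \overset{\simeq}{\rightarrow} \Mo_{\mathfrak{g}_*}^{\mathcal{J}}$.
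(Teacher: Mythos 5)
Your proposal is correct and follows essentially the same route as the paper: both restrict the equivalence of Proposition \ref{p3.3}, take the sheafification $V_* \mapsto \mathscr{F}^{V_*}$ as the essential inverse, and verify that the jumping conditions transfer by using Proposition \ref{p4.2} to identify $\mathscr{H}_{A_*}(\mathscr{F}^{V_*}_{\eta})$ with the Aomoto complex over $H^0(A_*)_*$. The only real difference is that where the paper dismisses the finite generation of the cohomology of $(V_*\otimes_k H^0(A_*)_*, d_{\eta'})$ as ``well known,'' you prove it via the $\mathfrak{m}$-adic filtration spectral sequence --- a worthwhile filling-in of detail rather than a divergence in method.
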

\begin{proof} By Proposition \ref{p3.3}, we have an equivalence
$$\theta: \qc(X)^{cn} \overset{\simeq}{\rightarrow} \Mod_{\mathfrak{g}_*}^{cn}$$ of $\infty$-categories. Any morphism $\phi: \mathfrak{h}_*\rightarrow \mathfrak{g}_* $ of differential graded Lie algebras induces a morphism of formal moduli problems $Y \rightarrow X$ and then gives rise to the following diagram \begin{center}
\begin{tikzpicture}[every node/.style={midway}]
  \matrix[column sep={12em,between origins}, row sep={4em}] at (0,0) {
    \node(Y){$\qc(X)^{cn}$} ; & \node(X) {$ \Mod_{\mathfrak{g}_*}^{cn}$}; \\
    \node(M) {$\qc(Y)^{cn}$}; & \node (N) {$ \Mod_{\mathfrak{h}_*}^{cn}$};\\
  };
  
  \draw[<-] (M) -- (Y) node[anchor=east] {};
  \draw[->] (Y) -- (X) node[anchor=south] {};
  \draw[<-] (N) -- (X) node[anchor=west] {};
  \draw[->] (M) -- (N) node[anchor=south] {};.
 
\end{tikzpicture} 
\end{center}
which commutes up to homotopy. In particular, by taking $\mathfrak{h}_*=0$, it follows that the composite functor 
$$\qc(X)^{cn} \overset{\theta}{\rightarrow} \Mod_{\mathfrak{g}_*}^{cn} \rightarrow \Mod_k^{cn}$$ is nothing but the evaluation at the base point $\eta_0 \in X(k)$. This means among the other things that $\theta$ maps $\qqc(X)^{\mathcal{G}}$ into the fiber product $$\Mo_{\mathfrak{g}_*}^{\mathcal{G}}=\Mod_{\mathfrak{g}_*}^{cn} \times_{\Mod_k^{cn}} \Mo_k^{\mathcal{G}} .$$
It remains to prove that if $V_* \in \Mo_{\mathfrak{g}_*}^{\mathcal{G}}$, there exists an object $\mathscr{F}\in \qqc(X)^{\mathcal{G}}$ such that $\theta(\mathscr{F})\simeq V_*$. For this to be done, it is sufficient to take the quasi-coherent sheaf $\mathscr{F}^{V_*}$ associated to the $\mathfrak{g}_*$-module $V_*$. Indeed, by construction $\theta(\mathscr{F}^{V_*})\simeq V_*$. We shall prove that $\mathscr{F}^{V_*}\in \qqc(X)^{\mathcal{G}}$. For any $A_* \in \dg_k$, and $\eta\in X(A_*) $, let $\eta'$ be the image of $\eta$ under the morphism $X(A_*)\rightarrow X(H^{0}(A_*))$ induced by the canonical one  $A_* \twoheadrightarrow H^{0}(A_*)$. On one hand, we have the following equivalence  
\begin{align*}
 \mathscr{H}_{A_*}(\mathscr{F}^{V_*}_{\eta})&\simeq H^0(A_*)\otimes_{A_*} \mathscr{F}^{V_*}_{\eta} \\
 &\simeq \mathscr{F}^{V_*}_{\eta'}.
\end{align*} On the other hand, $\eta' \in  X(H^{0}(A_*))$ and $ H^{0}(A_*) \in \Art_k$. As a matter of course, an easy application of  Proposition \ref{p4.2} gives $$\mathscr{F}^{V_*}_{\eta'} \overset{\simeq}{\rightarrow} (V_*\otimes _{k}H^{0}(A_*),d_{\eta'}).$$ Combining with the previous equivalence, we obtain that $$\mathscr{H}_{A_*}(\mathscr{F}^{V_*}_{\eta}) \overset{\simeq}{\rightarrow} (V_*\otimes _{k}H^{0}(A_*),d_{\eta'}) $$
where the latter is well known to be a bounded above chain complex of flat $H^{0}(A_*)$-modules whose cohomologies are finitely generated over $H^{0}(A_*)$. This justifies that $\mathscr{F}^{V_*}\in \qqc(X)^{\mathcal{G}}$ and then finishes the proof.
\end{proof}




Let $\Ho(\fun(\dg_k,\ens))$ be the homotopy category of functors from $\dg_k$ to $\ens$. Then for any $i$ and $k$, Proposition \ref{p4.3} and Proposition \ref{p4.4} give rise to the following functors
\begin{align*}
X_{-}^{i,k} : \qqc(X)^{\mathcal{G}} &\rightarrow \Ho(\fun(\dg_k,\ens)) \\
\mathscr{F} &\mapsto X_{\mathscr{F}}^{ik}
\end{align*} and 
\begin{align*}
\mathfrak{MC}_{\mathfrak{g}_*,-}^{i,k} : \Mo_{\mathfrak{g}_*}^{\mathcal{G}} &\rightarrow \Ho(\fun(\dg_k,\ens))\\
\mathscr{F} &\mapsto \mathfrak{MC}_{\mathfrak{g}_*,V_*}^{i,k},
\end{align*} which then descend to functors of homotopy category
\begin{align*}
X_{-}^{i,k}: \Ho(\qqc(X)^{\mathcal{G}}) \rightarrow \Ho(\fun(\dg_k,\ens)) 
\end{align*} and 
\begin{align*}
\mathfrak{MC}_{\mathfrak{g}_*,-}^{i,k} : \Ho(\Mo_{\mathfrak{g}_*}^{\mathcal{G}}) \rightarrow \Ho(\fun(\dg_k,\ens)),
\end{align*} respectively.
\begin{coro} \label{t5.1} 
The following diagram 
$$\xymatrix{
\Ho(\Mo_{\mathfrak{g}_*}^{\mathcal{G}})  \ar[rr]^-{\mathrm{sheafification}}_-{\simeq} \ar[ddr]_-{\mathfrak{MC}_{\mathfrak{g}_*,-}^{i,k}} & &\Ho(\qqc(X)^{\mathcal{G}})\ar[ddl]^-{X_{-}^{i,k}}
\\ &   &
\\ & \Ho(\fun(\dg_k,\ens))&
}$$
commutes. In other words, derived cohomology jump functors associated to good modules over $\mathfrak{g}_*$ are homotopically equivalent to fmps with cohomological constraints associated to good coherent sheaves over $X$.

\end{coro}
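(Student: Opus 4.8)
The plan is to exploit the fact that the top horizontal arrow of the triangle is precisely the sheafification equivalence established above in Theorem \ref{t5.1}. Consequently, verifying commutativity reduces to producing, for each jumping $\mathfrak{g}_*$-module $V_*$, an identification
$$X_{\mathscr{F}^{V_*}}^{ik}\simeq \mathfrak{MC}_{\mathfrak{g}_*,V_*}^{ik}$$
in $\Ho(\fun(\dg_k,\ens))$ that is natural in $V_*$. Both sides are subfunctors of one and the same formal moduli problem, carved out by the vanishing of cohomology jump ideals, so first I would fix $A_*\in\dg_k$ and compare the two subsets of sections pointwise, then upgrade the pointwise comparison to an equivalence of functors.

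The essential input is Proposition \ref{p4.2} together with the chain of equivalences $X\simeq\Map_{\Lie_k}(D(-),\mathfrak{g}_*)\simeq\mathfrak{MC}_{\mathfrak{g}_*}$ recorded in (\ref{e4.3}). Under this chain, a section $\eta\in X(A_*)$ is carried to a Maurer--Cartan element $\eta\in\mathfrak{MC}_{\mathfrak{g}_*}(A_*)$, and Proposition \ref{p4.2} supplies an equivalence of $A_*$-modules
$$\mathscr{F}^{V_*}_{\eta}\simeq V_*^{A_*,\eta}=(A_*\otimes_k V_*,d_\eta).$$
Applying the functor $\mathscr{H}_{A_*}$, which was shown in $\S\ref{s4}$ to preserve equivalences, yields $\mathscr{H}_{A_*}(\mathscr{F}^{V_*}_{\eta})\simeq\mathscr{H}_{A_*}(V_*^{A_*,\eta})$ in $\Mod_{H^0(A_*)_*}$.

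Since cohomology jump ideals are homotopy invariants of bounded-above complexes with finitely generated cohomology (Definition \ref{d4.1}), this last equivalence forces
$$J_k^i(\mathscr{H}_{A_*}(\mathscr{F}^{V_*}_{\eta}))=J_k^i(\mathscr{H}_{A_*}(V_*^{A_*,\eta})).$$
In particular the two vanishing conditions defining $X_{\mathscr{F}^{V_*}}^{ik}(A_*)$ and $\mathfrak{MC}_{\mathfrak{g}_*,V_*}^{ik}(A_*)$ select exactly the same sections of $X(A_*)\simeq\mathfrak{MC}_{\mathfrak{g}_*}(A_*)$; this is the content already anticipated in Remark \ref{r4.5}.

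The hard part is not the pointwise statement, which is essentially immediate, but the coherence: I must check that these identifications are compatible with the transition maps along morphisms $A_*\to A_*'$ in $\dg_k$, so as to glue into a genuine natural equivalence of functors, and that the resulting equivalence is itself natural in $V_*$ and compatible with the equivalence of Theorem \ref{t5.1}. Compatibility in $A_*$ is exactly the base-change computation already carried out in the well-definedness arguments of Propositions \ref{p4.3} and \ref{p4.4}, whereas naturality in $V_*$ follows from the functoriality of the sheafification construction and of Proposition \ref{p4.2}. Since the diagram is asserted only at the level of homotopy categories, the higher coherence data is discarded, and it suffices to exhibit the natural isomorphism between the two composite functors, which the steps above produce.
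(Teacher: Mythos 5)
Your proposal is correct and takes essentially the same approach as the paper: the paper likewise reduces commutativity to the sheafification equivalence of Theorem \ref{t5.1} together with the identification of $X_{\mathscr{F}^{V_*}}^{ik}$ with $\mathfrak{MC}_{\mathfrak{g}_*,V_*}^{ik}$, which it delegates to Remark \ref{r4.5} (itself resting on Proposition \ref{p4.2} and the equivalences (\ref{e4.3})) --- exactly the ingredients you invoke. The only difference is presentational: you unpack the pointwise comparison via jump-ideal homotopy invariance and the base-change/naturality checks that the paper leaves implicit.
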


\begin{proof}
This is just an application of Theorem \ref{t5.1} and what we have done so far. Given a good coherent sheaf $\mathscr{F}$, by Proposition \ref{p4.3}, formal moduli problems with cohomological constraints associated to the pair $(X, \mathscr{F})$ are uniquely determined up to equivalences. Moreover, for the sake of the equivalence  $$\qqc(X)^{\mathcal{G}} \overset{\simeq}{\rightarrow} \Mo_{\mathfrak{g}_*}^{\mathcal{G}}$$ in Theorem \ref{t5.1}, each good coherent sheaf $\mathscr{F}$ corresponds also to a good $\mathfrak{g}_*$-module $V_*$ whose underlying chain complex can be taken to be the stalk of $\mathscr{F}$ at the base point of the given formal moduli problem $X$. Applying the sheafification to this $\mathfrak{g}_*$-module $V_*$, we obtain a sheaf $\mathscr{F}^{V*}$ which is equivalent to $\mathscr{F}$. The commutativity of the diagram follows directly from Remark \ref{r4.5}. This finishes the verification.
\end{proof}
\begin{remark} We have shown that deformation problems with cohomology constraints can be naturally treated in the $\infty$-setting. At the same time, many situations are already effectively handled by identifying the associated $(\mathrm{dgLa}, \text{module})$ pair and working with the classical Maurer--Cartan equations (cf. \cite{1}). Furthermore, a more flexible framework in terms of $L_{\infty}$-pairs has been developed in \cite{BR18} together with a short guide to applications given in \cite{BD25}, which provides additional advantages, as illustrated by the examples therein (see also \cite{Bud25}).\end{remark}

\section{Some classes of fmps with cohomological constraints} \label{s6}
\subsection{Prorepresentable formal moduli problems} We recall that a formal moduli problem $X$ is said to be pro-representable if there exists a pro-object $A_* \in \dg_k$ such that $X $ is equivalent to $\Map_{\dg_k}(A_*,-)$. In this case, the associated governing differential graded Lie algebra is nothing but $D(A_*)$, where $D$ is the Koszul duality. One interesting characteristic of $D(A_*)$ is that it is cohomologically concentrated in strictly positive degrees. Surprisingly, this turns out to be also the sufficient condition for the prorepresentability of formal moduli problems: that is, a formal moduli problem whose associated differential graded Lie algebra is cohomologically concentrated in strictly positives is prorepresentable (cf. \cite[Corollary 2.3.6]{6}). Of course, the propresentability of $X$ implies that of its corresponding functor of artinian rings. More precisely, if $X$ is proreprentable by $A_*$ then the restriction of its connected components $\pi_0(X)$ on the sub-category $\Art_k$ of local artinian $k$-algebras with residue field $k$ is prorepresentable by $H^0(A_*)$ in the classical sense (cf. Remark \ref{r2.2}).

Now, let $X$ be a prorepresentable formal moduli problem, i.e. $X\simeq\Map_{\dg_k}(A_*,-) $ for some pro-object $A_* \in \dg_k$. For simplicity, we can assume that $A_*$ is actually a differential graded local artinian $k$-algebra augmented to $k$. Then the associated classical functor of artinian rings $\pi_0(X)$ is representable by $\pi_0(\Sp(A_*))$. Moreover, the $\infty$-category $\qc(X)$ of quasi-coherent sheaves has a quite simple  
description: it is equivalent to the $\infty$-category $\Mod_{A_*}$ of $A_*$-modules (cf. Remark \ref{r3.1}). In particular, let $\mathscr{F}$ be a good coherent sheaf on $X$. Then $\mathscr{F}$ is nothing but an $A_*$-module $V_*$ which is also coherent, flat and bounded above. In a fancier way, $\mathscr{F}$ can be thought of as a chain complex of $\mathcal{O}_{\Sp(A_*)}$ -modules on the derived scheme $\Sp(A_*)$. We consider the derived cohomology jump functor \begin{align*}
X_{\mathscr{F}}^{i,k} : \dg_k &\rightarrow \ens \\
B_* &\mapsto \lbrace \eta \in X(B_*)\mid    J_k^i(\mathscr{H}_{B_*}(\mathscr{F}_{\eta}))=0 \rbrace 
\end{align*} associated to the pair $(X,\mathscr{F})$.
The cohomology jump ideal of $\mathscr{F}$ generates naturally an ideal sheaf which actually defines a closed subscheme $\pi_0(\Sp(A_*))^{i,k}$ of $\pi_0(\Sp(A_*))$. Therefore, the classical cohomology jump functor $\pi_0(X_{\mathscr{F}}^{i,k})$ is represented by $\pi_0(\Sp(A_*))^{i,k}$.

\subsection{Semi-prorepresentable formal moduli problems}  It  happens quite often that formal moduli problems are not (pro)representable due to that fact that their associated differential graded Lie algebra has some non-positive components. The typical example is the formal moduli problem associated to a complex compact manifold or a projective scheme. In fact, it can be proved that the $0^{\text{th}}$-cohomology of its associated differential graded Lie algebra is the space of its vector fields which is not vanishing in general. This motivates the author in \cite{3} to introduce the notion of semi-prorepresentability. More specifically, a formal moduli problem $X$ is said to be semi-prorepresentable if there exits a pro-object $A_*$ in $\dg_k$ and an étale morphism $\Sp(A_*) \rightarrow X$ (cf. \cite[Definition 2.2]{3}). It can be checked without any difficulty that if $X$ is a semi-prorepresentable formal moduli problem then its associated functor of artinian rings admits a semi-universal element whose base is nothing but the  ``Kuranishi space'' in the classical sense. 

Now, let $X$ be a semi-prorepresentable formal moduli problem, i.e. there exists an étale morphism $\pi:\Sp(A_*) \rightarrow X$ for some pro-object $A_*$. As before, for simplicity, we suppose further that $A_*$ is really an object of $\dg_k$. Given a good coherent sheaf $\mathscr{F}$ on $X$. The pullback of $\mathscr{F}$ by $\pi$ is a good coherent sheaf $\mathscr{F}_{\pi}$ on $\Sp(A_*)$. The latter one is nothing but a coherent, flat and bounded above chain complex of $A_*$-modules. Unwinding the definition of semi-prorepresentability, $\mathscr{F}_{\pi}$ is ``semi-universal'' in the following sense: for any $B_* \in \dg_k$ and for any element $\eta: \Sp(B_*) \rightarrow X$, there exists a morphism $f_\eta: \Sp(B_*) \rightarrow \Sp(A_*)$ (not unique up to equivalences in general) such that $\pi\circ f_{\eta}= \eta$ and $$\mathscr{F}_{\eta}\simeq B_* \otimes_{A_*}^{f_{\pi}}\mathscr{F}_{\pi}. $$
Just like the pro-representable case, the jump cohomology ideal $J_k^i(\mathscr{H}_{A_*}(\mathscr{F}_{\pi}))$ defines a closed subscheme $\pi_0(\Sp(A_*))^{i,k}$ of $\pi_0(\Sp(A_*))$, by which the classical cohomology jump functor $\pi_0(X_{\mathscr{F}}^{i,k})$ is ``semi-represented".



\end{document}